

\documentclass[preprint,11pt]{elsarticle}




\usepackage{amssymb,latexsym,amsmath}     
\usepackage{epsf}
\usepackage{array}
\usepackage{amssymb,amscd}

\usepackage{relsize}%
\usepackage{calc}
\usepackage[mathscr]{eucal}
\usepackage{hhline}
\usepackage{tikz}
\usepackage{multirow}
\usetikzlibrary{arrows.meta}
\usepackage{amssymb,amsmath,amstext,amsgen,amsfonts,amsthm,mathrsfs,verbatim,url,hyperref,mathdots}
\usepackage{enumitem}
\usepackage{pgfplots}
\pgfplotsset{compat=1.15}
\usepackage{mathrsfs}
\usetikzlibrary{arrows}

\usepackage{arydshln}
\newlist{inparaenum}{enumerate}{2}
\setlist[inparaenum]{nosep}
\setlist[inparaenum,1]{label=\bfseries\alph*.}

\setlength{\dashlinedash}{1pt}
\setlength{\dashlinegap}{4.5pt}
\setlength{\arrayrulewidth}{1pt}



\newtheorem{theorem}{Theorem}[section]

\newtheorem{example}[theorem]{Example}

\newtheorem{lemma}[theorem]{Lemma}
\newtheorem{Conjecture}[theorem]{Conjecture}
\newtheorem{remark}[theorem]{Remark}
\newtheorem{proposition}[theorem]{Proposition}
\newtheorem{corollary}[theorem]{Corollary}

\def\comment#1{}

\def\invddots{\mathinner{\mskip1mu\raise1pt\vbox{\kern7pt\hbox{.}}\mskip2mu
		\raise4pt\hbox{.}\mskip2mu\raise7pt\hbox{.}\mskip1mu}}


\journal{TBD}

\begin{document}

\begin{frontmatter}



\title{Backward Stability of the Schur Decomposition under Small Perturbation}

 \author[label1]{A. Minenkova, E. Nitch-Griffin, V. Olshevsky}
 \address[label1]{University of Connecticut}



\begin{abstract}
In the present paper, we  show the backward stability of the Schur decomposition for a given matrix under small perturbation.  
\end{abstract}

\begin{keyword} perturbation theory  \sep Schur decomposition \sep unitary Hessenberg matrices \sep the Gohberg-Kaashoek numbers \sep invariant subspaces \sep gaps.



\end{keyword}

\end{frontmatter}


\section{Introduction}

 Lipschitz-H\"{o}lder stability was investigated for several canonical forms like Jordan, flipped-orthogonal, flipped-orthogonal conjugare symmetrical, real canonical forms~\cite{BOP,DMO,GLR86,K66,KGMP03,MP80,SS90,W65}, but it was never addressed for the Schur decomposition.

We begin by recalling some classical results.

\subsection{Eigenvalues' Stability}

The first question to consider is what happens to the eigenvalues of a given matrix under small perturbation. In general it might not be true but if we impose additional restrictions on eigenvalues of matrices then we have Lipschitz stability of the eigenvalues. 

The following result can be found in \cite{BOP}.
\begin{proposition} \label{dimeq}
		Let $A_0$ be an $n\times n$ matrix and $\{\lambda_1,\dots,\lambda_n\}$ be its eigenvalues, and $A$ being its perturbation with $\|A-A_0\|<\varepsilon$ for sufficiently small $\varepsilon$ depending on $A_0$ and the eigenvalues $\mu_j$. If the number of eigenvalues of $A_0$ is the same as of $A$, then there is a certain ordering of them such that for some positive $K=K(A_0)$ 
	$$|\mu_i-\lambda_i|\leq K\|A-A_0\|,\quad i=1,2,\dots,|\sigma(A_0)|.$$
\end{proposition}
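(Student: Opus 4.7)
The plan is to exploit the fact that spectral projectors depend analytically on the matrix, so that \emph{averages} of clustered eigenvalues are Lipschitz in $A$ even though individual eigenvalues may only be H\"older; the hypothesis on the number of distinct eigenvalues is exactly what lets each cluster collapse to a single point.

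First, I would enumerate the distinct eigenvalues of $A_0$ as $\lambda_1,\ldots,\lambda_k$ with $k=|\sigma(A_0)|$, write $m_j$ for the algebraic multiplicity of $\lambda_j$, and pick a radius $0<r<\tfrac12\min_{i\ne j}|\lambda_i-\lambda_j|$. Let $\Gamma_j$ denote the positively oriented circle of radius $r$ centered at $\lambda_j$. For $\varepsilon$ small enough (depending on $A_0$ through $\max_{z\in\bigcup\Gamma_j}\|(zI-A_0)^{-1}\|$) the resolvent $(zI-A)^{-1}$ is defined and bounded uniformly in $z\in\bigcup_j\Gamma_j$, so the spectral projectors
$$P_j(A)\;=\;\frac{1}{2\pi i}\oint_{\Gamma_j}(zI-A)^{-1}\,dz$$
are analytic in $A$ near $A_0$, and in particular $\|P_j(A)-P_j(A_0)\|=O(\|A-A_0\|)$ while $\rank P_j(A)=m_j$.

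Next I would show the localization: shrinking $\varepsilon$ further (using continuous dependence of the coefficients, hence the roots, of the characteristic polynomial, or a Bauer--Fike-type estimate), every eigenvalue of $A$ lies inside $\bigcup_j\{|z-\lambda_j|<r\}$. Combined with the rank count above, the $j$-th disk contains exactly $m_j$ eigenvalues of $A$ counted with multiplicity. Now the hypothesis $|\sigma(A)|=k$ is used via pigeonhole: each of the $k$ disks contains at least one \emph{distinct} eigenvalue of $A$, the total being $k$, so each disk contains exactly one distinct eigenvalue $\mu_j$, necessarily of algebraic multiplicity $m_j$.

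The Lipschitz bound then comes from the trace identity. Since the only eigenvalue of $A$ inside $\Gamma_j$ is $\mu_j$, with multiplicity $m_j$,
$$\trace\!\bigl(A\,P_j(A)\bigr)=m_j\mu_j,\qquad \trace\!\bigl(A_0\,P_j(A_0)\bigr)=m_j\lambda_j.$$
Writing $m_j(\mu_j-\lambda_j)=\trace\bigl((A-A_0)P_j(A)\bigr)+\trace\bigl(A_0(P_j(A)-P_j(A_0))\bigr)$ and using $\|P_j(A)-P_j(A_0)\|=O(\|A-A_0\|)$ together with $\|A-A_0\|<\varepsilon$ yields $|\mu_j-\lambda_j|\le K_j\|A-A_0\|$; taking $K=\max_j K_j$ finishes the proof.

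The main obstacle I expect is the combinatorial bookkeeping in the middle step: the hypothesis is only that the \emph{counts} of distinct eigenvalues agree, and one has to combine the rank-of-projector calculation with the global localization of $\sigma(A)$ inside the disks to conclude, without any a priori pairing, that the clusters are singletons and that their multiplicities match those of $A_0$. Once that is secured, the Lipschitz estimate via $\trace(A\,P_j(A))$ is the standard trick that bypasses the $m_j$-th-root behavior inherent to Hölder bounds on individual eigenvalues.
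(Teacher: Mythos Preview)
The paper does not prove this proposition; it merely cites it from \cite{BOP} (``The following result can be found in \cite{BOP}''), so there is no in-paper argument to compare against.

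Your proposal is correct and is exactly the standard route to Lipschitz eigenvalue perturbation under an equal-number-of-eigenvalues hypothesis: analyticity of the Riesz projectors $P_j(A)$ in $A$, localization of $\sigma(A)$ into disjoint disks, a pigeonhole count using $|\sigma(A)|=|\sigma(A_0)|$ to force one distinct eigenvalue per disk with the matching multiplicity, and then the trace identity $\trace\bigl(A\,P_j(A)\bigr)=m_j\mu_j$ to extract a Lipschitz bound on $\mu_j-\lambda_j$. The bookkeeping you flag as the ``main obstacle'' is handled by your own argument: since the disks are disjoint and each carries rank $m_j\ge1$, each contributes at least one distinct eigenvalue of $A$, and there are $k$ disks with $|\sigma(A)|=k$, so exactly one per disk; the multiplicity is then forced by $\rank P_j(A)=m_j$. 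One cosmetic remark: when bounding $\trace\bigl((A-A_0)P_j(A)\bigr)$ you need $\|P_j(A)\|$ bounded uniformly for $A$ near $A_0$, which follows from the same resolvent bound you already invoked; and the constants $K_j$ depend on $A_0$ through $m_j$, $\|A_0\|$, and $\sup_{z\in\Gamma_j}\|(zI-A_0)^{-1}\|$, which is consistent with the statement $K=K(A_0)$.
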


 For the general case of the eigenvalues stability we have the following result (see~\cite[Appendix K]{Os73}). 
\begin{proposition}\label{eigstab}
	Let $A_0$ be an $n\times n$ matrix and $\{\lambda_1,\dots,\lambda_n\}$ be its eigenvalues. Then, there is an ordering of $\lambda_j$'s that for every $A$ with $\|A-A_0\|<\varepsilon$ for sufficiently small $\varepsilon$ depending on $A_0$ there is an ordering of its eigenvalues $\mu_j$'s and a positive constant $K=K(A_0)$ such that
	\begin{equation}\label{holder}
	|\mu_j-\lambda_j|\leq K\|A-A_0\|^{1/n}.
	\end{equation}\end{proposition}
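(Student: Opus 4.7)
The plan is to factor the map $A \mapsto \sigma(A)$ through the coefficients of the characteristic polynomial, so that a small change in $A$ produces a Lipschitz change in the coefficients of $p_A(\lambda)=\det(\lambda I - A)$, which in turn produces a $1/n$-Hölder change in the roots.

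First I would observe that the coefficients of $p_A(\lambda) = \lambda^n + c_1(A)\lambda^{n-1} + \cdots + c_n(A)$ are polynomials in the entries of $A$ (they are, up to sign, the sums of the $k\times k$ principal minors). Therefore, on any bounded neighborhood of $A_0$ they are Lipschitz: there is a constant $L=L(A_0)$ such that
$$|c_j(A)-c_j(A_0)|\le L\,\|A-A_0\|,\qquad j=1,\dots,n,$$
for every $A$ with $\|A-A_0\|<\varepsilon$. This first step is routine and does not depend on the spectral structure of $A_0$.

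Second, I would invoke the classical theorem of Ostrowski (see, e.g., Marden or Henrici) on the continuous dependence of roots of a monic polynomial on its coefficients: if two monic polynomials of degree $n$ have coefficients differing by at most $\eta$, then their roots can be ordered as $\lambda_1,\dots,\lambda_n$ and $\mu_1,\dots,\mu_n$ so that
$$|\mu_j-\lambda_j|\le C\,\eta^{1/n},\qquad j=1,\dots,n,$$
with $C$ depending only on a uniform bound for the moduli of the roots (which is available for $\varepsilon$ small enough thanks to Proposition~\ref{dimeq} type reasoning, or directly from Cauchy's root bound applied to $p_A$). Applying this with $\eta = L\|A-A_0\|$ and absorbing $L^{1/n}$ and $C$ into a single $K=K(A_0)$ gives the desired inequality \eqref{holder}.

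The main obstacle is really the Hölder exponent $1/n$: it is genuine and cannot be upgraded to Lipschitz when $A_0$ has a single Jordan block of size $n$, as the example $A_0$ the $n\times n$ nilpotent Jordan block and $A=A_0+\varepsilon E_{n,1}$ shows (its eigenvalues are $\varepsilon^{1/n}\omega^k$). Thus the soft step (Lipschitz continuity of coefficients) is easy, while the sharp step is the Ostrowski root bound, and the ordering of the eigenvalues $\mu_j$ in the statement is precisely the ordering delivered by that bound. Any finer information on $A_0$ (such as the sizes of its Jordan blocks, leading to the Gohberg--Kaashoek numbers) would allow one to replace $1/n$ by a better exponent, but for the general case $1/n$ is the best possible.
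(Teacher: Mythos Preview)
The paper does not give its own proof of this proposition; it simply quotes the result with a reference to Ostrowski~\cite[Appendix~K]{Os73}. Your argument is correct and is precisely the classical route behind that reference: Lipschitz dependence of the coefficients of the characteristic polynomial on the entries of $A$, followed by Ostrowski's $\eta^{1/n}$ bound for the roots of a monic polynomial in terms of a perturbation $\eta$ of its coefficients. The only cosmetic point is that invoking Proposition~\ref{dimeq} to bound the root moduli is unnecessary (and slightly circular in spirit); as you yourself note, Cauchy's root bound applied to $p_A$ does the job directly and keeps the constant depending only on $A_0$.
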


This type of bounds is called H\"older because of the power $1/n$ for the matrix norm.

 The following example shows that the power $1/n$ in~\eqref{holder} cannot be relaxed and, in general, we can hope only for a H\"older type bound.

 \begin{example}\label{ex1}
 	Consider the following matrices $A_0,A\in \mathbb{C}^{2\times 2}$.
 $$A_0=\begin{bmatrix}
 0&0\\
 1&0
 \end{bmatrix}\text{ and }\ 
 A=\begin{bmatrix}
 0&\epsilon\\
 1&0
 \end{bmatrix}.$$
 \end{example}
 Note that $\|A-A_0\|=\epsilon$. Moreover, $\sigma(A_0)=\{0\}$ and $\sigma(A)=\{\pm\sqrt{\epsilon}\}$. It is easy to see that in this case we have
 $$|0\mp\sqrt{\epsilon}|=\epsilon^{1/2}=\|A-A_0\|^{1/2}.$$
 This example can be easily modified for $n\times n$-matrices.
 
 \subsection{Backward Stability of the Schur Decomposition}
 
 	Every $n\times n$-matrix $A$ is unitary similar to an upper triangular matrix $T$, i.e, $A=UTU^*$ where $U$ is unitary. This triangular matrix $T$ is called a \textit{Schur Triangular form} and the factorization is called the \textit{Schur Decomposition}.
 	
  Note that diagonal entries of $T$ are the eigenvalues of $A$. That is why the eigenvalues stability results give us the confidence to consider stability of the Schur decomposition.

	But what kind of stability can we have for the Schur canonical form? We start by considering the following type of result.
\begin{Conjecture}[\textbf{Forward Stability}]\label{conj}
	Let $A_0 = U_0 T_0U^*_0\in\mathbb{C}^{n\times n}$ where $U_0$ is unitary and $T_0$ is upper triangular. Then, there
	exist constants $K,\epsilon>0$ (depending on $A_0$ only) such that for all $A$ with $\|A-A_0\|<\epsilon$ there
	exists a factorization $U TU^*$ of $A$ such that
	\begin{equation*}
	\|U-U_0\|+\|T-T_0\|\leq K\|A-A_0\|^{1/n}
	\end{equation*}
\end{Conjecture}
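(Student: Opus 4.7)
I would attempt this by combining the H\"older eigenvalue stability of Proposition~\ref{eigstab} with standard perturbation theory of invariant subspaces. A Schur decomposition of $A_0$ is the same thing as a complete $A_0$-invariant flag $\{0\}=V_0\subset V_1\subset\cdots\subset V_n=\mathbb{C}^n$ with $V_k=\spn\{u_1^{(0)},\dots,u_k^{(0)}\}$ and spectrum of $A_0|_{V_k}$ equal to $\{\lambda_1,\dots,\lambda_k\}$. If I can produce an $A$-invariant flag $W_0\subset W_1\subset\cdots\subset W_n$ whose members lie within gap $O(\|A-A_0\|^{1/n})$ of the $V_k$'s, then Gram--Schmidt on any flag-compatible basis yields a unitary $U$ close to $U_0$, and $T:=U^*AU$ automatically comes out upper triangular and close to $T_0$.

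I would construct the $W_k$'s cluster by cluster. First order the eigenvalues of $A$ as $\mu_1,\dots,\mu_n$ so that $|\mu_j-\lambda_j|=O(\|A-A_0\|^{1/n})$ via Proposition~\ref{eigstab}. At every index $k$ marking the boundary of a cluster of repeated eigenvalues (i.e.\ $\lambda_k\neq\lambda_{k+1}$), choose a contour $\Gamma_k$ enclosing exactly $\lambda_1,\dots,\lambda_k$. For $A$ close to $A_0$ the contour encloses $\mu_1,\dots,\mu_k$ as well, and the Riesz projector $P_k^A=\frac{1}{2\pi i}\oint_{\Gamma_k}(zI-A)^{-1}\,dz$ is Lipschitz in $A$; hence $W_k:=\operatorname{range}P_k^A$ is $O(\|A-A_0\|)$-close to $V_k$. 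At indices interior to a cluster, $V_k$ lies strictly inside a larger spectral subspace of $A_0$ that no contour can isolate, so one must instead prescribe $W_k$ by hand inside the corresponding spectral subspace of $A$. This reduces the problem, within each cluster, to the same statement for a matrix of the form $\lambda_j I+N$ with $N$ nilpotent; recursion on the dimension reduces it further to $A_0$ nilpotent, where the H\"older behavior of eigenvectors of a perturbed Jordan block (as witnessed already in Example~\ref{ex1}) should furnish the $\|A-A_0\|^{1/n}$ bound through deflation.

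The genuine obstruction, and what I expect to be fatal, is the semisimple case of algebraic multiplicity $\geq 2$. Inside a cluster on which $A_0$ acts as a scalar multiple of the identity of multiplicity $m$, the Schur vectors $u_i^{(0)}$ form an essentially arbitrary orthonormal basis of the eigenspace; a generic perturbation splits the eigenvalue into $m$ distinct simple eigenvalues whose eigenvectors are dictated by the perturbation direction and typically make an $\Omega(1)$ angle with the prescribed $u_i^{(0)}$. A clean instance is $A_0=0_{2\times 2}$ with $U_0=I$ perturbed by $A=\varepsilon(e_1 e_2^\top + e_2 e_1^\top)$: the eigenvectors of $A$ are $\tfrac{1}{\sqrt{2}}(1,\pm 1)^\top$, so every Schur unitary of $A$ is a $\pm\pi/4$ rotation up to diagonal phases, and $\|U-I\|$ is bounded below by $\sqrt{2-\sqrt{2}}$ independently of $\varepsilon$. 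This strongly suggests that Conjecture~\ref{conj} is false as stated, and that a correct positive statement must either impose non-derogatoriness on $A_0$ (every eigenvalue of geometric multiplicity one) or permit the target Schur decomposition of $A_0$ to be rechosen in response to $A$, which is presumably the backward-type reformulation taken up in the sequel.
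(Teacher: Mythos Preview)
Your conclusion matches the paper's: the conjecture is false, and the obstruction is a repeated semisimple eigenvalue. The paper does not attempt the positive argument at all; it immediately exhibits the counterexample $A_0=2I_2$, $U_0=I$, $A=\begin{bmatrix}2&0\\\epsilon&2\end{bmatrix}$, whose unique (up to phase) eigenvector is $e_2$, forcing $\|U-U_0\|$ to be bounded away from zero. Your example $A_0=0_{2\times 2}$, $A=\varepsilon(e_1e_2^\top+e_2e_1^\top)$ is a different instance of exactly the same phenomenon and your lower bound $\sqrt{2-\sqrt{2}}$ is correct, so the disproof is essentially the same as the paper's. Your closing remarks---that non-derogatoriness is the natural hypothesis for a positive forward result, and that otherwise one must allow the Schur factorization of $A_0$ to be rechosen---anticipate precisely what the paper does next (Theorem~\ref{GKnonstable} for the negative side, Theorem~\ref{main1} for the backward reformulation).
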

We call this property forward stability of the Schur form. 

As the following example shows this conjecture is not valid in the form stated.

\begin{example}
	Consider the following matrix and its perturbation,
$$A_0=\begin{bmatrix}
2&0\\
0&2
\end{bmatrix}\text{ and }
A=\begin{bmatrix}
2&0\\\epsilon&2
\end{bmatrix}.$$
\end{example}

Let us consider the following Schur factorization of $A_0$ 
$$A_0=U_0T_0U_0^*=\begin{bmatrix}
1&0\\0&1
\end{bmatrix}\begin{bmatrix}
2&0\\
0&2
\end{bmatrix}\begin{bmatrix}
1&0\\0&1
\end{bmatrix}$$
and the Schur factorization of $A$ 
$$
A=UTU^*=\begin{bmatrix}
0&1\\
1&0
\end{bmatrix}
\begin{bmatrix}
2&\epsilon\\
0&2
\end{bmatrix} 
\begin{bmatrix}
0&1\\
1&0
\end{bmatrix}.$$

Since the first column of $U$ has to be an eigenvector of $A$, and the latter is essentially unique, the matrix $U$ is essentially unique as well.
Hence, the distance $\|U-U_0\|$ is quite large. Since $U$ was the only possible choice
for triangulating $A$, we can conclude that our Conjecture~\ref{conj} above is false in general.
 Although forward stability results have
been obtained for other canonical forms (see \cite{BOP,DMO}), in  case of the Schur canonical form we cannot have the stability mentioned in Conjecture~\ref{conj}. The next statement shows us why.

\begin{theorem}[Different Gohberg-Kaashoek Numbers]\label{GKnonstable} Let us fix matrix $A_0$ and its fixed Schur decomposition  $A_0=U_0T_0U_0^*$.
There exists $K>0$ such that in any neighborhood of $A_0$, i.e. $\{A:\|A-A_0\|<\varepsilon\}$ for any $\varepsilon>0$,
\begin{equation}\label{nonstable}
 \underset{A}{\sup}\ \underset{U,T}{\inf}\ {\|U-U_0\|+\|T-T_0\|}>M>0,
\end{equation} where the supremum is taken over all $A$ in this neighborhood having different Gohberg-Kaashoek numbers from $A_0$ and the infimum is taken over all their Schur factorizations $A=UTU^*$.
\end{theorem}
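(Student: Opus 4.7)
The plan is to construct, for each $\varepsilon > 0$, a single matrix $A_\varepsilon$ satisfying $\|A_\varepsilon - A_0\| = \varepsilon$ and having Gohberg-Kaashoek numbers different from $A_0$, for which every Schur decomposition stays bounded away from $(U_0, T_0)$ uniformly in $\varepsilon$. By unitary invariance of the norm, I replace $A$ and $U$ by $U_0^* A U_0$ and $U_0^* U$ to reduce to the case $U_0 = I$, $A_0 = T_0$ upper triangular. The statement is interesting only when $A_0$ has an eigenvalue $\lambda_0$ of geometric multiplicity at least two; otherwise every sufficiently nearby matrix has the same GK numbers as $A_0$ and the supremum in \eqref{nonstable} is over the empty set. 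After a permutation of the Schur basis, assume $T_0[1,1] = T_0[2,2] = \lambda_0$ and $T_0[1,2] = 0$.

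Define $A_\varepsilon := A_0 + \varepsilon E_{21}$, where $E_{21}$ is the matrix unit at position $(2,1)$. Then $\|A_\varepsilon - A_0\| = \varepsilon$, and the leading $2 \times 2$ block of $A_\varepsilon$ is
\[
\begin{bmatrix} \lambda_0 & 0 \\ \varepsilon & \lambda_0 \end{bmatrix},
\]
a single Jordan block of size $2$. The two size-$1$ Jordan blocks of $A_0$ associated with $\lambda_0$ coalesce into one size-$2$ block, so the GK partition for $\lambda_0$ shortens by one entry, and the GK numbers of $A_\varepsilon$ differ from those of $A_0$.

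For any Schur decomposition $A_\varepsilon = U T U^*$, consider the first column $u_1$ of $U$, which is an eigenvector of $A_\varepsilon$ with eigenvalue $T[1,1]$. By continuity of spectral projectors at isolated eigenvalues (applied to the cluster of eigenvalues near $\lambda_0$ via Proposition~\ref{dimeq}), for $\varepsilon$ small the eigenvectors of $A_\varepsilon$ whose eigenvalues approach $\lambda_0$ lie close to $\spn\{e_1, e_2\}$, and within that $2$-plane the only eigenvector direction of $A_\varepsilon$ is $\spn\{e_2\}$. Two cases arise: (a) if $T[1,1]$ is close to $\lambda_0$, then $u_1$ must be close to $\pm e_2$, so $\|U - I\| \geq \|u_1 - e_1\| \to \sqrt{2}$ as $\varepsilon \to 0$; (b) if $T[1,1]$ is close to some other eigenvalue $\lambda' \neq \lambda_0$ of $A_0$, then the diagonal of $T$ permutes that of $T_0$ nontrivially, giving $\|T - T_0\| \geq |\lambda' - \lambda_0|$, bounded below by the minimum spectral gap of $A_0$. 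Either case yields the required uniform lower bound $M > 0$.

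The delicate point is the rigidity in case (a): the unique eigenvector direction of $A_\varepsilon$ associated with the eigenvalue near $\lambda_0$ must stay bounded away from $e_1$ uniformly in $\varepsilon$. This follows from the exact computation $\ker(A_\varepsilon - \lambda_0 I)|_V = \spn\{e_2\}$ on the leading $2$-plane $V = \spn\{e_1, e_2\}$ (precisely as in the $2 \times 2$ example preceding the theorem), combined with continuity of spectral projectors to ensure that the perturbed eigenvector does not leak significantly into $V^\perp$. The same embedding-of-the-$2 \times 2$-example strategy applies to any $A_0$ with an eigenvalue of geometric multiplicity at least two, and produces a constant $M > 0$ depending only on $A_0$.
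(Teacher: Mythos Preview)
Your overall strategy---reduce to $U_0=I$, construct a specific perturbation that kills the eigenvector $e_1$, then argue that every Schur factor $U$ must have its first column bounded away from $e_1$---matches the paper's approach in spirit. However, two genuine gaps prevent the argument from going through.

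\medskip
\textbf{Gap 1: the Schur decomposition is fixed.} You write ``after a permutation of the Schur basis, assume $T_0[1,1]=T_0[2,2]=\lambda_0$ and $T_0[1,2]=0$.'' But the theorem concerns a \emph{fixed} factorization $A_0=U_0T_0U_0^*$; you are not free to reorder the diagonal of $T_0$ (doing so would in general destroy upper-triangularity and in any case changes the pair $(U_0,T_0)$ for which the bound must be proved). In the given $T_0$ the derogatory eigenvalue $\lambda_0$ need not sit in position $(1,1)$ at all, and even if it occupies positions $(1,1)$ and $(2,2)$ one may have $T_0[1,2]\neq 0$. The paper addresses exactly this issue: when $T_0[1,1]\neq\lambda_0$ it peels off the leading rows until $\lambda_0$ first appears, and invokes Lemma~\ref{reducingJordan} to verify that the residual block $T_1$ still has $\dim\ker(T_1-\lambda_0 I)\ge 2$, so that Lemma~\ref{1diffGK} applies to $T_1$.

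\medskip
\textbf{Gap 2: the perturbation $A_0+\varepsilon E_{21}$ can fail on both counts.} Even granting your special form of $T_0$, the rank-one perturbation in position $(2,1)$ need not change the GK numbers and need not push the eigenspace away from $e_1$. Take
\[
T_0=\begin{bmatrix}\lambda_0&0&0\\ 0&\lambda_0&1\\ 0&0&\lambda_0\end{bmatrix},\qquad
A_\varepsilon=\begin{bmatrix}\lambda_0&0&0\\ \varepsilon&\lambda_0&1\\ 0&0&\lambda_0\end{bmatrix}.
\]
Here $T_0[1,1]=T_0[2,2]=\lambda_0$ and $T_0[1,2]=0$, so your hypotheses hold. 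But $A_\varepsilon-\lambda_0 I$ has rank $1$, hence geometric multiplicity $2$, and the Jordan type of $A_\varepsilon$ is still $(2,1)$: the GK numbers are \emph{unchanged}, so $A_\varepsilon$ is not even an admissible witness for the supremum. Worse, $\ker(A_\varepsilon-\lambda_0 I)=\spn\{e_2,\,e_1-\varepsilon e_3\}$, so $A_\varepsilon$ has a unit eigenvector $(e_1-\varepsilon e_3)/\sqrt{1+\varepsilon^2}$ arbitrarily close to $e_1$; one can build a Schur factorization of $A_\varepsilon$ with $U$ close to $I$, and your lower bound collapses. The claim ``within that $2$-plane the only eigenvector direction of $A_\varepsilon$ is $\spn\{e_2\}$'' is a statement about the leading $2\times 2$ block in isolation, not about the eigenspace of the full matrix, and the latter is what matters.

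\medskip
The paper avoids this by building the perturbation in the \emph{Jordan} basis rather than in the Schur basis: writing $A_0=P_0J_0P_0^{-1}$ with two Jordan blocks for $\lambda_0$ placed first, it sets $A=P_0(J_0+J_\varepsilon)P_0^{-1}$ where $J_\varepsilon$ places a single $\varepsilon$ in the superdiagonal slot linking the two blocks. This guarantees that the two chains merge (so the GK numbers genuinely change) and that $u_1$ is no longer close to any eigenvector of $A$; the quantitative lower bound then comes from the backward stability theorem combined with the distance between $u_1$ and the remaining eigenvectors of $A_0$ (Lemma~\ref{1diffGK}).
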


 


\subsubsection{Gohberg-Kaashoek Numbers}
Theorem~\ref{GKnonstable} uses Gohberg-Kaashoek (GK) numbers. Let us introduce these numbers now (see \cite{GK78,O89,MO} for details).

	Let $A\in\mathbb{C}^{n\times n}$, $\sigma(A)$ be the set of all its eigenvalues, and $m_1(A,\lambda)\le m_2(A,\lambda)\le\dots\le m_t(A,\lambda)$ be the sizes of all blocks corresponding to $\lambda\in\sigma(A)$ in the Jordan form of $A$. We set $m_i(A,\lambda)=0$ ($i=t+1,\dots,n$) for convenience. The numbers
	$$m_i(A)=\sum_{\lambda\in\sigma(A)}m_i(A,\lambda)$$
	are called \textit{the Gohberg-Kaashoek numbers}.

We can actually prove a more general result, for this we need to define the dual Gohberg-Kaashoek numbers.

Let $m=\left[\begin{smallmatrix}
m_1\\m_2\\ \vdots\\m_n
\end{smallmatrix}\right]$ be a vector with integer entries such that $m_i\geq m_{i+1}$ for $i=1,\dots,n-1$. The vector $k=\left[\begin{smallmatrix}
k_1\\k_2\\ \vdots\\k_n
\end{smallmatrix}\right] $ with
$
k_i=\underset{1\leq l\leq n}{\max}\{l:m_l\geq i\}
$  is called {\textit{dual}} to $m$.

In terms of the Gohberg-Kaashoek numbers $m_j$'s it means that if we have $$A=\left[\begin{array}{cccc|ccc|cc}
\lambda&1&0&0&&&&&\\
0&\lambda&1&0&&&&&\\
0&0&\lambda&1&&&&&\\
0&0&0&\lambda&&&&&\\ \hline
&&&&\lambda&1&0&&\\
&&&&0&\lambda&1&&\\
&&&&0&0&\lambda&&\\\hline
&&&&&&&\lambda&1\\ 
&&&&&&&0&\lambda\\ 
\end{array}\right]$$
then we can put the Jordan chains corresponding to $\lambda$ in the following order.
\begin{center}	
	\begin{tikzpicture}
	
	\draw[lightgray] (-1.15,-.65) to (-1.15,3);
	\draw[lightgray] (.15,-.65) to (.15,3);
	\draw[lightgray] (1.45,-.65) to (1.45,3);
	\draw[lightgray] (2.75,-.65) to (2.75,3);
	\draw[lightgray] (4.05,-.65) to (4.05,3);
	\draw[lightgray] (-1.9,1.45) to (7,1.45);
	\draw[lightgray] (-1.9,2.15) to (7,2.15);
	\draw[lightgray] (-1.9,.75) to (7,.75);
	\draw[lightgray] (-1.9,.05) to (7,.05);
	\draw[very thick,blue] (-1.01,2) to[bend left] (0.01,2);
	\draw[very thick,blue] (-1.01,0.19) to[bend right] (0.01,0.19);
	\draw[very thick,blue] (-1,0.175) to (-1,2.015);
	\draw[very thick,blue] (0,0.175) to (0,2.015);
	\draw[very thick,blue] (.299,2) to[bend left] (1.301,2);
	\draw[very thick,blue] (.299,.19) to[bend right] (1.301,.19);
	\draw[very thick,blue] (.3,.175) to (.3,2.015);
	\draw[very thick,blue] (1.3,.175) to (1.3,2.015);
	\draw[very thick,blue] (1.599,2) to[bend left] (2.601,2);
	\draw[very thick,blue] (1.599,.9) to[bend right] (2.601,.9);
	\draw[very thick,blue] (1.6,.885) to (1.6,2.015);
	\draw[very thick,blue] (2.6,.885) to (2.6,2.015);
	\draw[very thick,blue] (2.95,2) to[bend left] (3.95,2);
	\draw[very thick,blue] (2.95,1.6) to[bend right] (3.95,1.6);
	\draw[very thick,blue] (2.95,1.585) to (2.95,2.015);
	\draw[very thick,blue] (3.95,1.585) to (3.95,2.015);
	
	\draw (5,2.5) node[right] {$m_j(A,\lambda)$};
	\draw (5.5,1.8) node[right] {$4$}
	(5.5,1.1) node[right] {$3$}
	(5.5,.4) node[right] {$2$};
	\draw (-.8,1.8) node[right] {$\bf{e_4}\longrightarrow\bf{e_3}\longrightarrow\bf{e_2}\longrightarrow\bf{e_1}\longrightarrow0$};
	\draw (-.8,1.1) node[right] {$\bf{e_7}\longrightarrow\bf{e_6}\longrightarrow\bf{e_5}\longrightarrow0$};
	\draw (-.8,.4) node[right] {$\bf{e_9}\longrightarrow\bf{e_8}\longrightarrow0$};
	\draw (-1.8,-.3) node[right] {$k_i\quad\;\;\quad 3\;\;\qquad 3\quad\;\;\quad 2\qquad\quad 1$};
	\end{tikzpicture}
\end{center}

Therefore, for $m(A)=(
4,3,2,0,0,0,0,0,0)^\top$ the dual is going to be $k(A)=(3,3,2,1,0,0,0,0,0)^\top$.
These numbers were introduced in \cite{GK78}, where the problem of complete description for the Jordan structure of a
matrix, which is a small perturbation of a given matrix, was posed. This problem was solved independently in \cite{DBT80} and \cite{MP80}.

\subsubsection{Backward Stability}

Although we are unable to obtain a general forward stability result, we can get the backward
stability result.

\begin{theorem} [Backward Stability]\label{main1}
	Let $A_0\in\mathbb{C}^{n\times n}$ be given. There
	exist constants $K,\epsilon>0$ $($depending on $A_0$ only$)$ such that for all $A$ with $\|A-A_0\|<\epsilon$ and for any factorization $U TU^*$ of $A$ $(U$ unitary and $T$ is upper triangular$)$ there exist $U_0$ and $T_0$ such that $A_0=U_0 T_0U_0^*$ is a Schur factorization of $A_0$ with
		\begin{equation}\label{eqback}
	\|U-U_0\|+\|T-T_0\|\leq K\|A-A_0\|^{1/n}.
	\end{equation}	
\end{theorem}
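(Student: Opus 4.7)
My plan is to construct $U_0$ as a small unitary correction of $U$. Writing $U_0 = U\exp(S)$ for a skew-Hermitian $S$ and $T_0 = \exp(-S)(T+E)\exp(S)$, where $E := U^*(A_0-A)U$ satisfies $\|E\| = \|A-A_0\|$, the identity $A_0 = U_0 T_0 U_0^*$ holds automatically, and only two requirements remain: $T_0$ must be upper triangular and $\|S\|$ must be small. Once $S$ is produced with $\|S\| = O(\|E\|^{1/n})$, the bounds $\|U-U_0\| = \|I-\exp(S)\| = O(\|S\|)$ and $\|T-T_0\| \le 2\|S\|\|T+E\| + \|E\| = O(\|E\|^{1/n})$ combine to give \eqref{eqback}.

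Expanding the conjugation and collecting the strictly lower triangular part, the condition that $T_0$ be upper triangular reduces to the equation
\[
[T,S]_{\text{low}} + E_{\text{low}} = (\text{terms of order } \|S\|^2 + \|E\|\|S\|),
\]
where ``low'' denotes the strictly-lower-triangular part. Since $T$ is upper triangular, the strict upper and diagonal components of $S$ drop out of $[T,S]_{\text{low}}$, so the linearization is governed by the Sylvester-type operator $L:\widehat S \mapsto [T,\widehat S]_{\text{low}}$ acting on strictly-lower-triangular $\widehat S$. Its diagonal action on an entry $\widehat S_{ij}$ (with $i>j$) is multiplication by $\mu_i - \mu_j$ where $\mu_k = T_{kk}$. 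When the diagonal entries of $T$ are pairwise distinct, $L$ is invertible with $\|L^{-1}\|$ controlled by the reciprocal of the minimum eigenvalue gap, and a Banach fixed-point argument produces $S$ with $\|S\| = O(\|E\|)$, i.e., a Lipschitz bound in the well-separated case. The difficulty appears when the $\mu_i$ coalesce in clusters, so that $L$ develops a nontrivial cokernel supported on those clustered blocks and the linearized equation is obstructed by the corresponding components of $E_{\text{low}}$.

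The main obstacle, and the origin of the H\"older exponent $1/n$, is to resolve this obstruction through the nonlinear terms. My plan is an iterative scheme in the spirit of a formal normal-form reduction: at each step solve the linearized equation on the complement of the current cokernel, lift the obstructed component using the higher-order commutators $[[T,S],S]$, $[E,S]$, and so on, and iterate. Within a cluster of size $k$, the nilpotent part of $T$ restricted to the corresponding block has nilpotency index at most $k$, so the obstruction at depth $j$ can only be absorbed by a contribution of order $\|E\|^{j/k}$, producing a convergent expansion in powers of $\|E\|^{1/k}$; a direct $2\times 2$ check with $T = \mu I + t\,e_1 e_2^\top$ confirms this exactly, giving $|\widehat S_{21}|^2 \sim |E_{21}|/t$ and hence $\|S\| = O(\|E\|^{1/2})$. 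Taking $k$ to be the size of the largest cluster of coalescing diagonal entries (bounded by the largest Jordan block of $A_0$, and in any case by $n$) then yields the desired $\|S\| = O(\|E\|^{1/n})$. This is precisely the place where Proposition~\ref{eigstab} and the Gohberg--Kaashoek structure of $A_0$ enter: they quantify how the largest Jordan block of $A_0$ controls the worst-case H\"older exponent. Making the iterative cancellation rigorous, tracking the constant $K$ uniformly in $A_0$, and verifying that the nonlinear corrections do not accumulate errors of order larger than $\|E\|^{1/n}$ will be the longest and most technical step of the proof.
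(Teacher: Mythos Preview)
Your approach is genuinely different from the paper's and considerably more analytic. The paper does not parametrize $U_0=U\exp(S)$ or solve a nonlinear equation; instead it proceeds column by column. Given the Schur form $A=UTU^*$, it factors $U$ as a product of unitary Hessenberg matrices $V_1\cdots V_{n-1}$ (Lemma~\ref{hessfactor}), so that the first column $v_1$ of $V_1$ is a unit eigenvector of $A$ for some eigenvalue $\mu_1$. It then invokes Proposition~\ref{kernel}, a Lipschitz bound on the semi-gap $\theta_0(\ker(A-\mu_1 I),\ker(A_0-\lambda_1 I))$, together with the eigenvalue bound of Proposition~\ref{eigstab}, to produce a unit eigenvector $u_1$ of $A_0$ with $\|v_1-u_1\|\le C\|A-A_0\|^{1/n}$. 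From $u_1$ one builds a unitary Hessenberg $U_1$ close to $V_1$, deflates both matrices, and iterates. The H\"older exponent enters only through the eigenvalue perturbation; no commutator expansion or obstruction theory is needed. What the paper's route buys is directness and an explicit construction of $U_0$; what yours would buy, if completed, is a single global equation for $S$ rather than an $n$-step recursion.

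That said, your proposal has a genuine gap: the core of the argument---resolving the obstruction on clustered blocks through the higher-order commutators and proving that the resulting iteration converges with constants depending only on $A_0$---is explicitly deferred, and this is not a routine step. The operator $L$ and the nilpotent data you rely on are built from $T$, which is a Schur form of the \emph{perturbed} matrix $A$; since the theorem must hold for \emph{every} Schur factorization of every nearby $A$, you must show that the relevant quantities are uniformly controlled by $A_0$ alone. Your own $2\times 2$ check already exposes the issue: $|\widehat S_{21}|^2\sim |E_{21}|/|t|$ with $t=T_{12}$, so a bound of the form $\|S\|\le K(A_0)\|E\|^{1/2}$ requires a lower bound on $|t|$ coming from the Jordan structure of $A_0$, not of $A$. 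In higher dimensions a cluster block of $T$ can carry any strictly upper-triangular pattern compatible with $\|T\|=\|A\|$, and your sketch does not explain how the normal-form iteration handles this uniformly, nor how interactions between different clusters are controlled. Until that step is carried out, what you have is a plausible program rather than a proof.
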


\subsubsection{Organization of the Paper}
In Section 2 we consider some auxillary results about what happens to GK numbers after we apply a reduction step. In Section 3 we discuss facts related to unitary Hessenberg matrices. In Section 4 we give a short overview about theory of gaps and semigaps. In Section 5 and 6 we present the proofs of Theorem \ref{main1} and Theorem \ref{GKnonstable} respectively.

\begin{center}
\definecolor{yqyqyq}{rgb}{0.7,0.7,0.7}
\definecolor{uququq}{rgb}{0.25098039215686274,0.25098039215686274,0.25098039215686274}
\begin{tikzpicture}[line cap=round,line join=round,>=triangle 45,x=0.9cm,y=0.9cm]
\clip(-8.2,-5.3) rectangle (7,7.3);
\draw [line width=2pt,color=uququq] (-3,3)-- (1,3);
\draw [line width=2pt,color=uququq] (1,3)-- (1,7);
\draw [line width=2pt,color=uququq] (1,7)-- (-3,7);
\draw [line width=2pt,color=uququq] (-3,7)-- (-3,3);
\draw [line width=2pt,color=uququq] (-3,-5)-- (1,-5);
\draw [line width=2pt,color=uququq] (1,-5)-- (1,-2.0033555448052507);
\draw [line width=2pt,color=uququq] (1,-2.0033555448052507)-- (-3,-2.0033555448052502);
\draw [line width=2pt,color=uququq] (-3,-2.0033555448052502)-- (-3,-5);
\draw [line width=2pt,color=yqyqyq] (-3,-0.5)-- (-3,1.5);
\draw [line width=2pt,color=yqyqyq] (-3,1.5)-- (1,1.5);
\draw [line width=2pt,color=yqyqyq] (1,1.5)-- (1,-0.5);
\draw [line width=2pt,color=yqyqyq] (1,-0.5)-- (-3,-0.5);
\draw [line width=2pt,color=yqyqyq] (2,1.5)-- (2,-0.5);
\draw [line width=2pt,color=yqyqyq] (2,-0.5)-- (6,-0.5);
\draw [line width=2pt,color=yqyqyq] (6,-0.5)-- (6,1.5);
\draw [line width=2pt,color=yqyqyq] (6,1.5)-- (2,1.5);
\draw [line width=2pt,color=yqyqyq] (2,6.5)-- (2,4);
\draw [line width=2pt,color=yqyqyq] (2,4)-- (6,4);
\draw [line width=2pt,color=yqyqyq] (6,4)-- (6,6.5);
\draw [line width=2pt,color=yqyqyq] (6,6.5)-- (2,6.5);
\draw [line width=2pt,color=yqyqyq] (2,-2.3)-- (2,-4);
\draw [line width=2pt,color=yqyqyq] (2,-4)-- (6,-4);
\draw [line width=2pt,color=yqyqyq] (6,-4)-- (6,-2.3);
\draw [line width=2pt,color=yqyqyq] (6,-2.3)-- (2,-2.3);
\draw [-{Stealth[length=4mm, width=3mm]},shift={(-5.4,1.1)},line width=1pt]  plot[domain=-0.37:0.37,variable=\t]({1*12.27*cos(\t r)+0*12.27*sin(\t r)},{0*12.27*cos(\t r)+1*12.27*sin(\t r)});
\draw [-{Stealth[length=4mm, width=3mm]},line width=1pt] (-1.059594184245494,1.5) -- (-1.059594184245494,3);
\draw [-{Stealth[length=4mm, width=3mm]},line width=1pt] (-1.0595941842454941,-0.5) -- (-1.0595941842454937,-2.0033555448052502);
\draw [-{Stealth[length=4mm, width=3mm]},line width=1pt] (2,0.5526191850498574) -- (1,0.5698647280818399);
\draw [-{Stealth[length=4mm, width=3mm]},line width=1pt] (2,5.3) -- (1,5);
\draw [-{Stealth[length=4mm, width=3mm]},line width=1pt] (2,-3.2) -- (1,-3.5);
\draw [line width=2pt,color=yqyqyq] (-4,-5)-- (-8,-5);
\draw [line width=2pt,color=yqyqyq] (-8,-5)-- (-8,-2.2);
\draw [line width=2pt,color=yqyqyq] (-8,-2.2)-- (-4,-2.2);
\draw [line width=2pt,color=yqyqyq] (-4,-2.2)-- (-4,-5);
\draw [line width=2pt,color=yqyqyq] (-4,-1)-- (-8,-1);
\draw [line width=2pt,color=yqyqyq] (-8,-1)-- (-8,1);
\draw [line width=2pt,color=yqyqyq] (-8,1)-- (-4,1);
\draw [line width=2pt,color=yqyqyq] (-4,1)-- (-4,-1);
\draw [line width=2pt,color=yqyqyq] (-8,6)-- (-8,4);
\draw [line width=2pt,color=yqyqyq] (-8,4)-- (-4,4);
\draw [line width=2pt,color=yqyqyq] (-4,4)-- (-4,6);
\draw [line width=2pt,color=yqyqyq] (-4,6)-- (-8,6);
\draw [-{Stealth[length=4mm, width=3mm]},line width=1pt] (-4,-3.5) -- (-3,-3.5);
\draw [-{Stealth[length=4mm, width=3mm]},line width=1pt] (-4,5) -- (-3,5);
\draw [-{Stealth[length=4mm, width=3mm]},line width=1pt] (-4,00) -- (-2,-2.0033555448052502);
\draw (-2.9,7) node[anchor=north west] {\parbox{3.4 cm}{\textit{    Section 6.    \\{\bf Proof of \\Theorem 1.6}}\\    Forward stability \\fails when \\GK numbers\\are different}};
\draw (-2.9,-2.2515721384004594) node[anchor=north west] {\parbox{3.4 cm}{\textit{    Section 5.\\{\bf    Proof of \\Theorem 1.7  } }\\ Backward stability }};
\draw (-3,1.5) node[anchor=north west] {\parbox{3.4 cm}{   \small   {\bf Proposition 4.4  }\\  Semi-gap between invariant     subspaces of $A$ and $A_0$}};
\draw (-8.05,6) node[anchor=north west] {\parbox{3.4 cm}{ \small   {\bf   Lemma 2.2 }\\   GK numbers change     in the reduction step}};
\draw (-8.05,1) node[anchor=north west] {\parbox{3.4 cm}{ \small   {\bf   Lemma 3.3  }\\  Existence of Schur     form using     Hessenberg matrices}};
\draw (-8.05,-2.25) node[anchor=north west] {\parbox{3.4 cm}{ \small   {\bf Lemma 3.4}    \\Representation of unitary     matrices as a product of unitary Hessenberg matrices}};
\draw (1.95,6.5) node[anchor=north west] {\parbox{3.4 cm}{  \small   {\bf Lemma 6.2  }\\  Particular case of Theorem 1.6    when (1,1)-entry $ \lambda$ of $T_0$    is with $\ker(A_0- \lambda I) \ge2$}};
\draw (2,-2.3) node[anchor=north west] {\parbox{3.4 cm}{   \small   {\bf Proposition 1.2  }\\  Stability of \\eigenvalues}};
\draw (1.95,1.5) node[anchor=north west] {\parbox{3.4 cm}{   \small   {\bf Lemma 4.3  \\  Lemma 4.5  \\  Lemma 4.6   }\\ Properties of semigaps}};
\end{tikzpicture}
\end{center}

\section{Auxiliary Results}

Before proving Theorem \ref{GKnonstable} we need a couple of technical lemmas. 

Let us start by introducing the following fact.
\begin{lemma}\label{basisfromvector}
    For every eigenvector $x$ of $A$ there is a Jordan basis of $A$ including $x$.
\end{lemma}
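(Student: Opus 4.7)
My plan is to reduce to the case of a nilpotent operator and then modify an arbitrary Jordan basis by swapping one chain for a new one terminating at $x$. First I would invoke the generalized eigenspace decomposition: if $\lambda$ is the eigenvalue of $x$, then $\mathbb{C}^n = V_\lambda \oplus V'$ with $V_\lambda = \ker(A-\lambda I)^n$ and $V' = \bigoplus_{\mu\neq\lambda} \ker(A-\mu I)^n$, both $A$-invariant. Since any Jordan basis of $A$ is the concatenation of Jordan bases of $A|_{V_\lambda}$ and $A|_{V'}$, it suffices to produce a Jordan basis of the nilpotent operator $N := (A-\lambda I)|_{V_\lambda}$ that contains $x$; on $V'$ I would simply take any Jordan basis.

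Second, I would fix any initial Jordan basis of $V_\lambda$ for $N$ and introduce the depth of $x$. Write the basis as chains $C_j = \{v_1^{(j)}, v_2^{(j)}, \dots, v_{\mu_j}^{(j)}\}$ with $Nv_1^{(j)}=0$ and $Nv_{k+1}^{(j)}=v_k^{(j)}$, and expand $x = \sum_j c_j v_1^{(j)} \in \ker N$. Define the depth
\[
d \ := \ \max\{\, i : x \in \operatorname{im} N^{i-1}\,\},
\]
and note, by testing against the chain basis, that $d = \max\{\mu_j : c_j \neq 0\}$. Setting $y := \sum_{j:\, c_j \neq 0} c_j v_d^{(j)}$ --- well-defined because every $j$ with $c_j \neq 0$ satisfies $\mu_j \geq d$ --- yields $N^{d-1} y = x$, and $\{y, Ny, \dots, N^{d-1} y\}$ forms a Jordan chain of length $d$ (linear independence follows by applying successive powers of $N$ and peeling off leading coefficients).

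Finally I would swap this new chain into the basis. Choose any index $j_0$ with $c_{j_0} \neq 0$ and $\mu_{j_0} = d$ (such $j_0$ exists by the characterization of $d$) and replace $C_{j_0}$ by $\{N^{d-1}y, N^{d-2}y, \dots, y\}$. At each level $k \in \{1, \dots, d\}$ the new vector is
\[
N^{d-k} y \ = \ c_{j_0}\, v_k^{(j_0)} \ + \sum_{\substack{j \neq j_0 \\ c_j \neq 0}} c_j\, v_k^{(j)},
\]
where every $v_k^{(j)}$ with $j \neq j_0$ still belongs to the retained basis; consequently the change-of-basis matrix between the old and the new collection is block-triangular with nonzero diagonal entry $c_{j_0}$, hence invertible. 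The resulting collection is therefore a Jordan basis of $N$ containing $x$, which, together with the Jordan basis of $V'$ already fixed, gives the desired Jordan basis of $A$. The main obstacle I anticipate is enforcing that $\mu_{j_0}$ equals $d$ \emph{exactly}: without this, the chain lengths would fail to match and the replacement would not yield a Jordan basis of the correct type. Identifying such a $j_0$ is precisely what the depth characterization delivers, which is why I isolate $d$ at the outset.
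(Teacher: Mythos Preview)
Your approach is essentially the paper's: expand $x$ in the eigenvectors of a fixed Jordan basis, build a new chain terminating at $x$, swap it for one of the old chains, and check linear independence via a triangular change-of-basis matrix. Your reduction to the nilpotent restriction and your explicit construction of $y$ make the argument more transparent than the paper's sketch.

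There is, however, one genuine slip. You define $d=\max\{\,i:x\in\operatorname{im}N^{i-1}\,\}$ and then assert $d=\max\{\mu_j:c_j\neq0\}$; in fact $d=\min\{\mu_j:c_j\neq0\}$. Since $\operatorname{im}N^{i-1}\cap\ker N=\operatorname{span}\{v_1^{(j)}:\mu_j\ge i\}$, one has $x\in\operatorname{im}N^{i-1}$ iff $c_j=0$ whenever $\mu_j<i$, i.e.\ iff $i\le\min\{\mu_j:c_j\neq0\}$. (Concretely: with two chains of lengths $2$ and $1$ and $x=v_1^{(1)}+v_1^{(2)}$, there is no $z$ with $Nz=x$, so the chain through $x$ has length $1$, not $2$.) Fortunately the remainder of your argument is already tailored to the correct value: both the well-definedness of $y$ (``every $j$ with $c_j\neq0$ satisfies $\mu_j\ge d$'') and the existence of $j_0$ with $c_{j_0}\neq0$, $\mu_{j_0}=d$ hold precisely when $d$ is the minimum. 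Replacing $\max$ by $\min$ in that one displayed identification makes the proof go through verbatim. This is exactly the point the paper handles by ordering the chains by length and picking the \emph{last} nonzero coefficient in the expansion of $x$.
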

\begin{proof} 
Let us fix any Jordan basis of $A$, with the Jordan chains  corresponding to $\lambda_t$ ordered by length. Now, given another eigenvector $x$ decompose it in that basis. Look for the last non-zero coefficient, say, $\alpha$ that is corresponding to the eigenvector, say, $y$. 

Then the chain for $x$ has the same length as for $y$ and we can replace the chain for $y$ with the chain for $x$. All it remains to prove is the linear independence of the new set of vectors.

Let $Y$ stands for the matrix whose columns are the Jordan basis we started with. $Y$ is invertible. Denote by $X$ the matrix where the chain for $y$ is replaced by the chain for $x$. 
Then $X=YR$, where $R$ is an upper triangular matrix that is invertible, since it has either 1 on its diagonal or $\alpha$.
Note that for the generalized eigenvectors of the chain for $x$ we have the same decompositions with the same coefficients as for $x$ with the vectors from the corresponding chains for the original basis, so we can write down the matrix $R$.
\end{proof}
The next result describes the recursion we will use. In particular, we want to figure out what happens to the GK numbers during each step of recursion.
Here is the idea behind it:
\begin{center}\begin{tikzpicture}[line cap=round,line join=round,>=triangle 45,x=1cm,y=1cm]
\clip(-12.12,2.5) rectangle (0,5);
\draw (-12.105249153459209,4.8848968485047015) node[anchor=north west] {$m_1(A,\lambda_t)\ge m_2(A,\lambda_t)\ge m_3(A,\lambda_t)\ge \ldots \ge m_{l-1}(A,\lambda_t)\ge m_l(A,\lambda_t)$};
\draw [line width=1pt] (-12,4.2)-- (-12,4);
\draw [line width=1pt] (-12,4)-- (-2.8,4);
\draw [line width=1pt] (-2.8,4)-- (-2.8,4.2);
\draw (-12.0921026287238,3.35) node[anchor=north west] {$m_j(A,\lambda_t)<m_l(A,\lambda_t)\text{ for }j>l$};
\draw [shift={(-11,2)},line width=1pt]  plot[domain=1.5882848543070331:2.1641892860463714,variable=\t]({1*1.5573359986717992*cos(\t r)+0*1.5573359986717992*sin(\t r)+0.1},{0*1.5573359986717992*cos(\t r)+1*1.5573359986717992*sin(\t r)-0.057});
\draw [shift={(-10.369907888385308,5.345025214243993)},line width=1pt]  plot[domain=4.073352330911641:4.695441450578285,variable=\t]({1*1.6752367373558714*cos(\t r)+0*1.6752367373558714*sin(\t r)},{0*1.6752367373558714*cos(\t r)+1*1.6752367373558714*sin(\t r)});
\draw [line width=1pt] (-10.92,3.5)-- (-10.396200937856124,3.5);
\draw [-{Stealth[length=4mm, width=2mm]},line width=1pt] (-10.396200937856124,3.67) -- (-10,3.67);
\draw [-{Stealth[length=4mm, width=2mm]},line width=1pt] (-10.422645098966939,3.5) -- (-10,3.5);
\draw (-9.988658671058467,3.9) node[anchor=north west] {$\text{The corresponding Jordan chains stay the same}.$};
\end{tikzpicture}
\end{center}

So what happens when $m_j(A,\lambda_t)=m_l(A,\lambda_t)$ for some $j$'s greater than $l$? Let $j^*$ be the maximal such index.

\definecolor{cqcqcq}{rgb}{0.7529411764705882,0.7529411764705882,0.7529411764705882}
\begin{tikzpicture}[line cap=round,line join=round,>=triangle 45,x=1cm,y=1cm]
\clip(-14.21201668692858,-1.3) rectangle (1.0090665963473844,1.4648284385882258);
\fill[line width=1pt,color=cqcqcq,fill=cqcqcq,fill opacity=0.05] (-13,-0.043) -- (-13,-0.65) -- (-11.43,-0.65) -- (-11.43,-0.043) -- cycle;
\draw (-13.090581182378063,0) node[anchor=north west] {$m_l(A,\lambda_t)=m_{l+1}(A,\lambda_t)=\ldots=m_{j^*}(A,\lambda_t)>m_l(A,\lambda_t)-1$};
\draw [line width=1pt,color=cqcqcq] (-13,-0.04338353162684747)-- (-13.005786097834683,-0.6589926324169902);
\draw [line width=1pt,color=cqcqcq] (-13.005786097834683,-0.6589926324169902)-- (-11.40848334248257,-0.6392728453138777);
\draw [line width=1pt,color=cqcqcq] (-11.40848334248257,-0.6392728453138777)-- (-11.40848334248257,-0.04570725351019176);
\draw [line width=1pt,color=cqcqcq] (-11.40848334248257,-0.04570725351019176)-- (-13,-0.04338353162684747);
\draw [shift={(-10,-3)},line width=1pt]  plot[domain=1.567619094245166:2.1657704681284176,variable=\t]({1*3.5695681211418098*cos(\t r)+0*3.5695681211418098*sin(\t r)},{0*3.5695681211418098*cos(\t r)+1*3.5695681211418098*sin(\t r)});
\draw [line width=1pt] (-9.98865867105847,0.5695501041069182)-- (-5.238161957918663,0.5754660402378526);
\draw [{Stealth[length=4mm, width=2mm]}-,shift={(-5.780456103254257,-3.2856682745515724)},line width=1pt]  plot[domain=1.007480065302928:1.4312596139915073,variable=\t]({1*3.8996944747832263*cos(\t r)+0*3.8996944747832263*sin(\t r)},{0*3.8996944747832263*cos(\t r)+1*3.8996944747832263*sin(\t r)});
\draw [rotate around={-0.4:(-3,-0.34643400683265624)},line width=1pt,color=cqcqcq,fill=cqcqcq,fill opacity=0.05] (-3.95,-0.37) ellipse (1.35cm and 0.35cm);
\draw (-11.875842296826331,1.2) node[anchor=north west] {$\text{Recursion decreases this chain by one vector.}$};
\end{tikzpicture}
{

Now let us formalize it.

\begin{lemma}\label{reducingJordan}
Consider matrix $B$ with the eigenvalues $\{\lambda_j\}$'s, having the GK numbers $\{m_j(B,\lambda_i)\}$ and $e_1$as its eigenvector corresponding to the Jordan chain for $\lambda_t$ and $m_l(B,\lambda_t)$, i.e. 
	\begin{equation*}
B=\left[\begin{array}{c|c}
\lambda_t&\begin{matrix}
\star&\cdots&\star
\end{matrix}\\ \hline\\
\begin{matrix}
0\\ \vdots \\0
\end{matrix}&\text{\huge $C$}
\end{array}\right].
\end{equation*} 
Then 
\begin{itemize}
    \item $m_j(C,\lambda_i)=m_j(B,\lambda_i)$ for all $i\ne t$ or $i=t$ and $j>l+1$;
    \item $m_l(C,\lambda_t)=m_l(B,\lambda_t)-1$, $m_{l+1}(C,\lambda_t)=m_{l+1}(B,\lambda_t)$ if $m_l(A,\lambda_t)>m_{l+1}(B,\lambda_t)$;
    \item  $m_{j^*}(C,\lambda_t)=m_l(B,\lambda_t)-1$, $m_{j}(C,\lambda_t)=m_{j+1}(B,\lambda_t)$ for $j=l,\dots,j^*-1$ if $m_l(B,\lambda_t)=m_{l+1}(B,\lambda_t)=\ldots=m_{j^*}(B,\lambda_t)$ and $j^*$ is the maximal such index;
    \item $m_j(C,\lambda_t)=m_j(B,\lambda_t)$ if $m_j(B,\lambda_1)<m_{l}(B,\lambda_t)$.
\end{itemize}
\end{lemma}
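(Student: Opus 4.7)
The plan is to build a Jordan basis for $C$ by projecting a carefully chosen Jordan basis of $B$ onto its last $n-1$ coordinates. First I would apply Lemma~\ref{basisfromvector} to the eigenvector $e_1$ of $B$ to obtain a Jordan basis $\{v_1,\ldots,v_n\}$ of $B$ in which $v_1=e_1$ and $v_1,\ldots,v_{m_l(B,\lambda_t)}$ is the singled-out chain for $\lambda_t$, so that $(B-\lambda_t I)v_j=v_{j-1}$ for $j\ge 2$. The other $v_j$'s sit in the remaining Jordan chains of $B$ (at $\lambda_t$ or at $\lambda_i\neq\lambda_t$).

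Next, let $P\colon\mathbb{C}^n\to\mathbb{C}^{n-1}$ be the projection that drops the first coordinate; since the first column of $B$ is $\lambda_t e_1$, one checks directly that $PB=CP$. Using this intertwining relation I would establish two facts. (a) The set $\{Pv_2,\ldots,Pv_n\}$ is linearly independent in $\mathbb{C}^{n-1}$, since $\sum_{j\ge 2}\alpha_j Pv_j=0$ forces $\sum_{j\ge 2}\alpha_j v_j\in\ker P=\spn\{v_1\}$, and then independence of the original basis gives $\alpha_j=0$. (b) For any Jordan chain $w_1,\ldots,w_k$ of $B$ at eigenvalue $\mu$, the relation $PB=CP$ yields $(C-\mu I)Pw_j=Pw_{j-1}$ and $(C-\mu I)Pw_1=0$; moreover $Pw_1\neq 0$ unless $w_1$ is a scalar multiple of $v_1$, because $w_1\in\spn\{e_1\}$ would force $\mu=\lambda_t$ and $w_1\parallel v_1$, contradicting the basis. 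Hence every Jordan chain of $B$ except the one containing $v_1$ projects to a Jordan chain of $C$ of the same length and eigenvalue, while the chain for $v_1$ becomes $Pv_2,\ldots,Pv_{m_l(B,\lambda_t)}$, a chain of length $m_l(B,\lambda_t)-1$.

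Combining (a) and (b), $\{Pv_j\}_{j=2}^n$ is a Jordan basis of $C$ whose multiset of block sizes equals that of $B$ except that a single block at $\lambda_t$ of size $m_l(B,\lambda_t)$ is replaced by one of size $m_l(B,\lambda_t)-1$. The four bullets of the lemma then follow from resorting this multiset into nonincreasing order: when $m_l(B,\lambda_t)>m_{l+1}(B,\lambda_t)$ no reordering is needed and only position $l$ at $\lambda_t$ changes, whereas when $m_l(B,\lambda_t)=\cdots=m_{j^*}(B,\lambda_t)$ the reduced block slides down to position $j^*$, producing exactly the cyclic shift stated. Block sizes strictly below $m_l(B,\lambda_t)$, and all GK numbers attached to eigenvalues $\lambda_i\neq\lambda_t$, are untouched. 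The main obstacle I anticipate is isolating precisely which chain of $B$ is affected by the projection and ruling out accidental collapses in the other chains; once step (b) is in place, the resorting in the last paragraph is a short combinatorial bookkeeping exercise.
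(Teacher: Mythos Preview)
Your proposal is correct and follows essentially the same approach as the paper. The paper phrases the argument in block-matrix language---it assembles the Jordan basis (with $e_1$ first) into an invertible matrix $R$, observes that $R$ and $R^{-1}$ are block upper triangular with lower-right block $R_1$, and reads off $J_1=R_1^{-1}CR_1$ as the Jordan form of $C$---whereas you use the equivalent projection formulation $PB=CP$ and show that $\{Pv_j\}_{j\ge 2}$ is a Jordan basis of $C$; the columns of the paper's $R_1$ are exactly your $Pv_2,\ldots,Pv_n$, so the two arguments coincide.
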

\begin{proof}
Note that due to Lemma \ref{basisfromvector} there is a Jordan basis of $B$ containing $e_1$.	Let $J$ be the canonical Jordan form of $B$ where the first block corresponds to the Jordan chain for $\lambda_t$ that we mentioned. Thus, there is a invertible matrix $R$ containing the Jordan basis $\{f_{i,j}^{(k)}\}_{i,j,k}$ ($i$ is the place in the Jordan chain for  $f_{i,j}^{(k)}$ corresponding to $m_j(B,\lambda_k)$) as its columns, where the first $m_l(B,\lambda_t)$ vectors forms the chain  of $A$, having $f_{0,l}^{(t)}=e_1$, i.e. $R=\left [f_{0,l}^{(t)}|f_{1,l}^{(t)}|\ldots|f_{m_l(B,\lambda_t)-1,l}^{(t)}|\ldots\right]$.
That is $R$ and $R^{-1}$ are of the following form.
	$$R=\left[\begin{array}{c|c}
1&\begin{matrix}
	\blacklozenge&\cdots&\blacklozenge
	\end{matrix}\\ \hline\\
	\begin{matrix}
	0\\ \vdots \\0
	\end{matrix}&\text{\huge $R_1$}
	\end{array}\right]\qquad\text{ and }\qquad R^{-1}=\left[\begin{array}{c|c}
1&\begin{matrix}
	\clubsuit&\cdots&\clubsuit
	\end{matrix}\\ \hline\\
	\begin{matrix}
	0\\ \vdots \\0
	\end{matrix}&\text{\huge $R_1^{-1}$}
	\end{array}\right].$$
This argument implies that 
\begin{equation}\left[\begin{array}{c|c}\label{chainpreserve}
\lambda_t&\begin{matrix}
1&0&\cdots&0
\end{matrix}\\ \hline\\
\begin{matrix}
0\\ \vdots \\0
\end{matrix}&\text{\huge $J_1$}
\end{array}\right]=J=R^{-1}BR=\end{equation}
$$=\left[\begin{array}{c|c}
1&\begin{matrix}
	\blacklozenge&\cdots&\blacklozenge
	\end{matrix}\\ \hline\\
	\begin{matrix}
	0\\ \vdots \\0
	\end{matrix}&\text{\huge $R_1$}
	\end{array}\right]\left[\begin{array}{c|c}
\lambda_t&\begin{matrix}
\star&\cdots&\star
\end{matrix}\\ \hline\\
\begin{matrix}
0\\ \vdots \\0
\end{matrix}&\text{\huge $C$}
\end{array}\right]\left[\begin{array}{c|c}
1&\begin{matrix}
	\clubsuit&\cdots&\clubsuit
	\end{matrix}\\ \hline\\
	\begin{matrix}
	0\\ \vdots \\0
	\end{matrix}&\text{\huge $R_1^{-1}$}
	\end{array}\right].$$
Note that $J_1=R_1^{-1}CR_1$ is the Jordan form of $C$.

So what is the difference between $J$ and $J_1$? The only Jordan chain that is affected is
$$0\leftarrow f_{0,l}^{(t)}\leftarrow f_{1,l}^{(t)}\leftarrow \ldots\leftarrow f_{m_l(B,\lambda_t)-1,l}^{(t)}.$$
We delete the eigenvector from this chain and truncate the rest of the vectors to get a Jordan chain of length $m_l(B,\lambda_t)-1$ of $C$. The length of the rest Jordan chains of $C$ stay the same as they were in $B$.
The conclusion of the lemma follows from this observation. 
\end{proof}

\section{Unitary Hessenberg Matrices and Schur Canonical Forms}

Throughout  this paper we are going to use the special type of structured matrices that are called Hessenberg. So, let us introduce it to the reader first.

	A matrix is called the \textit{upper Hessenberg} if it has zero entries below the first subdiagonal.
	Similarly, it is called the \textit{lower Hessenberg} if it has zeros above the first super diagonal.


The following is a well-known fact (for example see~\cite{S05}).

%
%

\begin{proposition}\label{unitaryhessenberg}
	An $n\times n$ lower unitary Hessenberg matrix can be represented in the following way
	\begin{equation*}\label{GGT}
	U=\begin{bmatrix}
	-\rho_1&\mu_1&0&\dots&0\\
	-\rho_2\mu_1&-\rho_2\bar{\rho}_1&\mu_2&\dots&0\\
	\vdots&\vdots&\vdots&\ddots&\vdots\\
	\vdots&\vdots&\vdots&\ddots&0\\
	\vdots&\vdots&\vdots&\ddots&\mu_{n-1}\\
	-\rho_n\mu_{n-1}\ldots\mu_1&-\rho_n\mu_{n-1}\ldots\mu_2\bar{\rho}_1&-\rho_n\mu_{n-1}\ldots\mu_3\bar{\rho}_2&\dots&-\rho_n\bar{\rho}_{n-1}
	\end{bmatrix},
	\end{equation*}
	where $\mu_j=\sqrt{1-\rho_j^2}$ for all $j$'s.
\end{proposition}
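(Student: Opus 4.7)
The plan is to prove the proposition by induction on $n$, using the Hessenberg structure to peel off one row and column at a time via a single Givens-like rotation.

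The base case $n=1$ is immediate: a $1\times 1$ unitary is unimodular, so we may set $\rho_1=-U_{1,1}$, $|\rho_1|=1$, $\mu_1=0$. For the inductive step, the first row of $U$ has at most two nonzero entries, $U_{1,1}$ and $U_{1,2}$, because $U$ is lower Hessenberg. Setting $\rho_1:=-U_{1,1}$, unitarity of the first row gives $|U_{1,2}|^2=1-|\rho_1|^2$; after absorbing a phase into a diagonal rescaling of the second row/column (which preserves both unitarity and the Hessenberg pattern), we may assume $U_{1,2}=\mu_1:=\sqrt{1-|\rho_1|^2}$.

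Next, I would introduce the $2\times 2$ block rotation
$$G_1 = \begin{bmatrix} -\bar\rho_1 & \mu_1 \\ \mu_1 & \rho_1 \end{bmatrix}\oplus I_{n-2},$$
which is unitary, and form $UG_1$. A direct computation using $|\rho_1|^2+\mu_1^2=1$ shows that the first row of $UG_1$ equals $e_1^{\top}$. Because $G_1$ touches only columns $1$ and $2$, and the forced Hessenberg zeros of $U$ live in columns $3,\ldots,n$, the product $UG_1$ is still lower Hessenberg. Unitarity of $UG_1$ with first row $e_1^{\top}$ forces the first column to be $e_1$, so $UG_1 = [1]\oplus U'$ for some $(n-1)\times(n-1)$ unitary lower Hessenberg matrix $U'$.

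Applying the inductive hypothesis to $U'$ yields parameters $\rho_1',\ldots,\rho_{n-1}'$ with $\mu_j'=\sqrt{1-|\rho_j'|^2}$ and the explicit entrywise formula for $U'$. Since $G_1$ is unitary, $U=([1]\oplus U')G_1^*$, and expanding this product entry by entry (noting that $G_1^*$ only mixes columns $1$ and $2$, multiplying them by $-\rho_1$ or $\mu_1$ and by $\mu_1$ or $\bar\rho_1$ respectively) reproduces the claimed formula under the relabeling $\rho_{j+1}:=\rho_j'$, $\mu_{j+1}:=\mu_j'$.

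The only delicate point I anticipate is the phase bookkeeping in Step~1: the entry $U_{1,2}$ is only determined up to a unimodular factor, and unless one carefully conjugates by a diagonal unitary to normalize it to the positive real value $\mu_1$, the downstream entries will carry extra phase factors that do not appear in the stated formula. Once this normalization is fixed (and the compatible diagonal similarity is shown to preserve the lower Hessenberg form), the remainder of the argument is a direct verification.
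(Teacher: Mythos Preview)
The paper does not give its own proof of this proposition: it is stated as ``a well-known fact (for example see~\cite{S05})'' and then used without justification. So there is nothing in the paper to compare your argument against line by line.

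Your inductive Givens-rotation argument is the standard way this representation is derived and is essentially correct. The reduction $UG_1=[1]\oplus U'$ works exactly as you describe: right-multiplication by $G_1$ only mixes columns $1$ and $2$, so the lower Hessenberg zeros in columns $3,\ldots,n$ are untouched, and unitarity then forces the first column of $UG_1$ to be $e_1$. Unwinding $U=([1]\oplus U')G_1^*$ and relabelling $\rho_{j+1}:=\rho_j'$ reproduces the displayed entries.

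The only point worth sharpening is the phase issue you already flagged. As written in the paper the superdiagonal entries are $\mu_1,\ldots,\mu_{n-1}\ge 0$, so the formula cannot literally represent \emph{every} lower unitary Hessenberg matrix, only those with nonnegative real superdiagonal (this is also the implicit convention behind the paper's Corollary that the first column determines the matrix). Under that reading no diagonal conjugation is needed: $U_{1,2}$ is already $\mu_1\ge 0$, and the induction goes through cleanly. If instead one wants the statement for arbitrary superdiagonal phases, then your diagonal rescaling shows the form holds up to conjugation by a unitary diagonal matrix, which is the most one can say. Either way your argument is sound; just be explicit about which reading you are proving.
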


The following statement is an immediate consequence of the previous proposition.

\begin{corollary}
	If  $\rho_j<1$ for all $j$, the first column of a lower unitary Hessenberg matrix completely defines the whole matrix.
\end{corollary}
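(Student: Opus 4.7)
The plan is to extract the parameters $\rho_1,\ldots,\rho_n$ one at a time from the first column using the explicit formula of Proposition~\ref{unitaryhessenberg}, and then observe that once all the $\rho_j$'s are in hand, so are the $\mu_j=\sqrt{1-\rho_j^2}$, hence every entry of $U$ is determined.

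Concretely, let $(u_{1,1},u_{2,1},\ldots,u_{n,1})^\top$ denote the first column of $U$. Reading off column 1 in the GGT form gives
\[
u_{1,1}=-\rho_1,\qquad u_{k,1}=-\rho_k\,\mu_{k-1}\mu_{k-2}\cdots\mu_1 \quad (k=2,\ldots,n).
\]
The hypothesis $\rho_j<1$ for all $j$ ensures $\mu_j=\sqrt{1-\rho_j^2}>0$, so none of the $\mu_j$'s vanish. First I would set $\rho_1:=-u_{1,1}$, which determines $\mu_1$. Inductively, supposing $\rho_1,\ldots,\rho_{k-1}$ (and therefore $\mu_1,\ldots,\mu_{k-1}$, all nonzero) have been recovered, I would define
\[
\rho_k:=-\,\frac{u_{k,1}}{\mu_{k-1}\mu_{k-2}\cdots\mu_1},
\]
which is well defined precisely because $\mu_1\cdots\mu_{k-1}\neq 0$, and then set $\mu_k:=\sqrt{1-\rho_k^2}$. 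This gives every $\rho_j$ and $\mu_j$.

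Finally, plugging these numbers back into the GGT representation of Proposition~\ref{unitaryhessenberg} reconstructs all the entries of $U$, proving that $U$ is completely determined by its first column. I do not foresee any real obstacle: the only delicate point is the nonvanishing of each $\mu_j$, which is exactly what the hypothesis $\rho_j<1$ supplies, so that the recursive division is legitimate at every step.
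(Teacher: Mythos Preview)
Your argument is correct and is exactly the ``immediate consequence'' the paper has in mind: read off the $\rho_j$'s recursively from the first column (the hypothesis guarantees each $\mu_j\neq 0$, so the divisions are legitimate), and then the GGT formula of Proposition~\ref{unitaryhessenberg} pins down every remaining entry. The paper gives no separate proof beyond this observation, so your write-up simply makes the one-line deduction explicit.
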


Let us consider the following properties of Hessenberg matrices first. Unitary Hessenberg matrices have a number of interesting properties, and are of particular
importance in their relationship with the Schur form. In the classical proof of the construction of the Schur form, one
builds an orthonormal set using eigenvectors of the matrix $A_0$, typically through the Gram-Schmidt process. With the above observation, we can derive the Schur form specifically through
the use of unitary Hessenberg matrices.

\begin{lemma}\label{schurhess}
	For any $A\in\mathbb{C}^{n\times n}$ there exist $U$ unitary and $T$ upper triangular with the eigenvalues of $A$ along the diagonal such that $A=UTU^*$. 
\end{lemma}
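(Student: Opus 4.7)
The plan is induction on $n$, using the parametrization of lower unitary Hessenberg matrices from Proposition~\ref{unitaryhessenberg} to build, at each stage, a unitary whose first column is a prescribed eigenvector of the current matrix. The base case $n=1$ is immediate (take $U=1$, $T=A$).

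For the inductive step I would pick any eigenvalue $\lambda_1$ of $A$ (existence via the characteristic polynomial) and a corresponding unit eigenvector $v_1 \in \mathbb{C}^n$, and then construct a lower unitary Hessenberg matrix $U_1$ whose first column equals $v_1$. To do this I solve the first-column equations dictated by Proposition~\ref{unitaryhessenberg},
$$v_{j1} = -\rho_j\,\mu_{j-1}\cdots\mu_1, \qquad \mu_j = \sqrt{1-|\rho_j|^2},$$
successively for $\rho_1,\rho_2,\ldots,\rho_n$. At each step either the running product $\mu_{j-1}\cdots\mu_1$ is still nonzero (and $\rho_j$ is uniquely determined by $v_{j1}$), or it vanishes; in the latter case the unit-norm condition $\|v_1\|=1$ forces all remaining entries of $v_1$ to be zero, so the remaining Schur parameters can be chosen freely to complete $U_1$.

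Once $U_1$ is in hand, $U_1 e_1 = v_1$ together with $A v_1 = \lambda_1 v_1$ gives
$$U_1^* A U_1 = \begin{bmatrix} \lambda_1 & \star \\ 0 & A_1 \end{bmatrix}$$
for some $A_1 \in \mathbb{C}^{(n-1)\times(n-1)}$. By the induction hypothesis, $A_1 = U' T' (U')^*$ with $U'$ unitary and $T'$ upper triangular whose diagonal lists the eigenvalues of $A_1$. Setting
$$U = U_1 \begin{bmatrix} 1 & 0 \\ 0 & U' \end{bmatrix}, \qquad T = \begin{bmatrix} \lambda_1 & \star\, U' \\ 0 & T' \end{bmatrix}$$
produces a unitary $U$, an upper triangular $T$, and $A = UTU^*$; the diagonal of $T$ is $\sigma(A) = \{\lambda_1\} \cup \sigma(A_1)$ counted with multiplicity, as required.

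The only step demanding real care is the preliminary realization of an arbitrary unit vector as the first column of a lower unitary Hessenberg matrix; the case analysis needed when some $\mu_j$ vanishes is the main obstacle. Once that is settled, the rest of the argument is a standard Schur-deflation recursion, tailored here so that the triangularizing unitary is assembled as a product of unitary Hessenberg factors, consistent with the emphasis of this section.
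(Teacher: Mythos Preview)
Your proof is correct and follows essentially the same deflation-via-unitary-Hessenberg approach as the paper: pick a unit eigenvector, embed it as the first column of a lower unitary Hessenberg matrix using Proposition~\ref{unitaryhessenberg}, conjugate to obtain the block form, and recurse. If anything you are more careful than the paper's own proof, which simply asserts that $H_1$ is determined by its first column without addressing the degenerate situation where some $\mu_j$ vanishes; your case analysis handles exactly that gap.
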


\begin{proof}
	Let $\lambda_1,\dots,\lambda_m$ be the eigenvalues of $A$ and let $x$ be a unit eigenvector of $A$ corresponding to eigenvalue $\lambda_1$. Moreover, pick $H_1$ to be a lower unitary Hessenberg matrix with $x$ as its first column. By Proposition \ref{unitaryhessenberg} this determines $H_1$ completely.
	
	Then, we have $H_1^*AH_1e_1=H_1^*Ax=H_1^*\lambda_1x=\lambda_1e_1$. In other words,
	\begin{equation}
	H_1^*AH_1=\left[\begin{array}{c|c}
	\lambda_1&\begin{matrix}
	\star&\cdots&\star
	\end{matrix}\\ \hline\\
	\begin{matrix}
	0\\ \vdots \\0
	\end{matrix}&\text{\huge $A_2$}
	\end{array}\right].
	\end{equation}
	
	By repeating the process of reducing the matrix dimensions, i.e. for each matrix $A_k$ constructing matrix $H_k$ in a way we described, we get a string of matrices $H_1,\dots,H_n$ that are all unitary Hessenberg and
	\begin{equation}\label{schhesseq}
	\widetilde{H}_{n-1}^*\dots\widetilde{H}_1^*A_0 \widetilde{H}_1\dots \widetilde{H}_{n-1}=T_0,
	\end{equation}
	Where $T_0$ is an upper triangular matrix, $\widetilde{H}_1=H_1$, and $\widetilde{H}_k=\left[\begin{smallmatrix}
	I_{k-1}&\textbf{0}\\
	\textbf{0}&H_k
	\end{smallmatrix}\right]$ for $k=2,\dots,n-1$ with $I_j$ being the $j\times j$ identity matrix. By taking $U=\widetilde{H}_1\cdot\ldots\cdot \widetilde{H}_{n-1}$ we get the result.
\end{proof}

Observe that we constructed our unitary matrix $U$ using only unitary Hessenberg matrices $H_k$,
each of whose first column was an eigenvector of the corresponding matrix $A_k$.

\begin{lemma}\label{hessfactor}
	Every unitary matrix $U$ admits a factorization $$U=H_1\cdot\ldots\cdot H_{n-1},$$ where matrices $H_i$'s are unitary Hessenberg.
\end{lemma}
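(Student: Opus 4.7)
The plan is to proceed by induction on $n$, at each step peeling off one dimension by choosing a single lower unitary Hessenberg factor whose first column matches that of the current unitary matrix. The base case $n=2$ is immediate, since any $2\times 2$ matrix is trivially lower Hessenberg, so we may simply take $H_1 = U$.

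For the inductive step, assume the statement holds in dimension $n-1$ and let $U$ be an $n\times n$ unitary matrix with first column $u$. First I would invoke Proposition~\ref{unitaryhessenberg} to construct a lower unitary Hessenberg $H_1$ whose first column equals $u$; this amounts to recursively solving $u_1 = -\rho_1$ and $u_j = -\rho_j\mu_{j-1}\cdots\mu_1$ for $j \geq 2$ for the Schur parameters, which is possible because $\|u\|=1$. Then $W = H_1^*U$ is unitary with first column $H_1^*u = H_1^*H_1 e_1 = e_1$, and since any unitary matrix whose first column is $e_1$ must also have first row $e_1^{\top}$, we obtain
\[
W = \begin{bmatrix} 1 & 0 \\ 0 & U' \end{bmatrix}
\]
for some $(n-1)\times(n-1)$ unitary $U'$. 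Applying the inductive hypothesis gives $U' = G_1 G_2 \cdots G_{n-2}$ with each $G_k$ unitary Hessenberg, and the block embeddings $H_{k+1} = \diag(1, G_k)$ remain lower unitary Hessenberg of size $n\times n$. Multiplying on the left by $H_1$ then yields $U = H_1 H_2 \cdots H_{n-1}$, as required.

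The main bookkeeping point I anticipate is the construction of $H_1$ in degenerate cases, namely when some intermediate parameter satisfies $|\rho_j|=1$ (equivalently $\mu_j=0$) and the recursion for the later $\rho$'s appears to stall. The constraint $\|u\|=1$, together with the already-determined $\rho_1,\ldots,\rho_j$, forces $u_{j+1} = \cdots = u_n = 0$ automatically, so the remaining Schur parameters may be chosen freely (for instance $\rho_{j+1}=\cdots=\rho_{n-1}=0$ and any $|\rho_n|=1$) to produce a valid lower unitary Hessenberg $H_1$ with first column $u$. Otherwise the argument is essentially a unitary analogue of the triangularization step used in Lemma~\ref{schurhess}, streamlined by the observation that zeroing the first column of a unitary matrix automatically zeros its first row.
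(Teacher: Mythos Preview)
Your proof is correct and follows essentially the same approach as the paper: choose a lower unitary Hessenberg $H_1$ whose first column matches that of $U$, observe that $H_1^*U$ has block form $\diag(1,U')$ with $U'$ unitary, and then iterate (the paper phrases the iteration as ``repeat the process'' rather than formal induction, but this is cosmetic). Your explicit handling of the degenerate case $|\rho_j|=1$ is a nice addition that the paper glosses over.
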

\begin{proof}
	Let $H_1$ be the unitary Hessenberg matrix whose first column $x_1$ is the same as $U$. Note that $x_1^*x_j=\delta_{1,j}$, since $U$ is unitary. Then
	\begin{equation}
	H_1^*U=\left[\begin{array}{c|c}
	1&\begin{matrix}
	0&\cdots&0
	\end{matrix}\\ \hline\\
	\begin{matrix}
	0\\ \vdots \\0
	\end{matrix}&\text{\huge $U_2$}
	\end{array}\right],
	\end{equation}
	where $U_2$ is unitary. As before we repeat the process until we get $$H^*_{n-1}\cdot\ldots\cdot H^*_{1}U=I.$$  The result follows from simply multiplying the both parts of this equality by $H_1\cdot\ldots\cdot H_{n-1}$. 
\end{proof}


\section{Gap and Semi-gap}

In this section we discuss some topological properties of the set of subspaces in $\mathbb{C}^n$, since in order to prove our main result, we require some facts from the theory of gaps. We begin by
stating some definitions.

	A matrix $P_\mathcal{M}$ is called an\textit{ orthogonal projector} onto a subspace $\mathcal{M}\subset \mathbb{C}^n$ if 
	\begin{itemize}
		\item $\mathrm{Im}P_\mathcal{M}=\mathcal{M}$;
		\item $P_\mathcal{M}^2=P_\mathcal{M}$;
		\item $P_\mathcal{M}^*=P_\mathcal{M}$.
	\end{itemize}

The following concept is the key definition.

	Let $\mathcal{M},\mathcal{N}$ be subspaces of $\mathbb{C}^n$, and let $P_\mathcal{M},P_\mathcal{N}$ be the orthogonal projectors onto $\mathcal{M}$ and $\mathcal{N}$
	respectively. We define  \textit{the gap} $\theta(\mathcal{M},\mathcal{N})$ between $\mathcal{M}$ and $\mathcal{N}$ as follows
	$$\theta(\mathcal{M},\mathcal{N})=\|P_\mathcal{M}-P_\mathcal{N}\|$$
	or, equivalently,
	$$\theta(\mathcal{M},\mathcal{N})=\max\left\{\underset{\begin{smallmatrix}
		x\in\mathcal{M}\\ \|x\|=1
		\end{smallmatrix}}{\sup}\underset{y\in\mathcal{N}}{\inf}\|x-y\|,\underset{\begin{smallmatrix}
		y\in\mathcal{N}\\ \|y\|=1
		\end{smallmatrix}}{\sup}\underset{x\in\mathcal{M}}{\inf}\|x-y\|\right\}.$$

It follows immediately from the definition that $\theta(\mathcal{M},\mathcal{N})$ is a metric on the set of all subspaces in $\mathbb{C}^n$. Moreover, $\theta(\mathcal{M},\mathcal{N})\leq 1$.

Note that the Hausdorff distance between sets $\text{Inv } A$ and  $\text{Inv }  B$ of
all invariant subspaces matrices $A$ and $B$ can be defined as follows
$$\text{dist }(\text{Inv }A,\text{Inv }B)=\max\{\underset{\mathcal{M}\in\text{Inv }A}{\sup}\theta(\mathcal{M},\text{Inv }B),\underset{\mathcal{N}\in\text{Inv }B}{\sup}\theta(\mathcal{N},\text{Inv }A)\}.
$$ This distance is a metric as well.

We are going to use the following property of gaps between subspaces. It can be found in \cite{GLR86}.
\begin{proposition}\label{orthogap}
	For subspaces  $\mathcal{M},\mathcal{N}\subset\mathbb{C}^n$, we have
	\begin{equation}\label{gap}
	\theta(\mathcal{M},\mathcal{N})=\theta(\mathcal{N}^\perp,\mathcal{M}^\perp).
	\end{equation}
\end{proposition}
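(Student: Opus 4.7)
The plan is to argue directly from the first (operator-norm) definition of the gap that was given just above the statement, namely $\theta(\mathcal{M},\mathcal{N})=\|P_\mathcal{M}-P_\mathcal{N}\|$. Everything hinges on the elementary identity
$$P_{\mathcal{M}^\perp}=I-P_\mathcal{M},$$
which follows straight from the three defining properties of an orthogonal projector: $I-P_\mathcal{M}$ is idempotent and self-adjoint, and its range is exactly $\mathcal{M}^\perp$. The analogous equality holds for $\mathcal{N}$.

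With this in hand the proof is a one-line computation. First I would write
$$\theta(\mathcal{M}^\perp,\mathcal{N}^\perp)=\|P_{\mathcal{M}^\perp}-P_{\mathcal{N}^\perp}\|=\|(I-P_\mathcal{M})-(I-P_\mathcal{N})\|=\|P_\mathcal{N}-P_\mathcal{M}\|=\|P_\mathcal{M}-P_\mathcal{N}\|=\theta(\mathcal{M},\mathcal{N}),$$
using at the penultimate step that the operator norm is unchanged by a sign flip. Finally, because the gap is symmetric in its two arguments (it is a metric, as noted right before the proposition), $\theta(\mathcal{M}^\perp,\mathcal{N}^\perp)=\theta(\mathcal{N}^\perp,\mathcal{M}^\perp)$, which yields the displayed equality \eqref{gap}.

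There is essentially no obstacle here: the only thing one must be a little careful about is justifying $P_{\mathcal{M}^\perp}=I-P_\mathcal{M}$, but this is immediate from the bulleted characterization of an orthogonal projector given just above the definition of the gap, so no additional machinery is needed. The second (sup–inf) formula for the gap plays no role in this argument; it is simpler to work exclusively with the projector-norm form.
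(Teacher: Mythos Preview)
Your argument is correct. The paper does not actually prove this proposition; it simply states the result and refers the reader to \cite{GLR86}. Your one-line computation via $P_{\mathcal{M}^\perp}=I-P_\mathcal{M}$ and the symmetry of the gap is the standard way to see it and is entirely self-contained given the projector-norm definition supplied just before the statement.
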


The symmetry with respect to subspaces of the gap is actually
a disadvantage.

\begin{proposition}
	Let $\mathcal{M},\mathcal{N}$ be subspaces of $\mathbb{C}^n$.
	\begin{itemize}
		\item [(i)]If $\dim(\mathcal{M})=\dim(\mathcal{N})$ then for any $x\in\mathcal{M}$ there exists a $y\in\mathcal{N}$ such that $\|x-y\|\leq \theta(\mathcal{M},\mathcal{N})$.
		\item[(ii)] If $\dim(\mathcal{M})\neq\dim(\mathcal{N})$ then $ \theta(\mathcal{M},\mathcal{N})=1$.
	\end{itemize}
\end{proposition}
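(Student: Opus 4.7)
The plan is to exploit the operator-norm characterization $\theta(\mathcal{M},\mathcal{N})=\|P_\mathcal{M}-P_\mathcal{N}\|$ from the definition and handle the two parts separately.

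For part~(i), I read the claim as $\|x-y\|\le\theta(\mathcal{M},\mathcal{N})\,\|x\|$ (which for unit $x$ is what is written). Given $x\in\mathcal{M}$, the natural candidate is $y:=P_\mathcal{N}x\in\mathcal{N}$. Since $x\in\mathcal{M}$ we have $P_\mathcal{M}x=x$, and therefore
\[
x-y \;=\; P_\mathcal{M}x-P_\mathcal{N}x \;=\; (P_\mathcal{M}-P_\mathcal{N})x,
\]
so $\|x-y\|\le\|P_\mathcal{M}-P_\mathcal{N}\|\,\|x\| =\theta(\mathcal{M},\mathcal{N})\,\|x\|$. I note that the equal-dimension hypothesis does not actually enter the argument; presumably it is imposed for symmetry with~(ii), or because only this case is needed downstream.

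For part~(ii), assume without loss of generality that $\dim\mathcal{M}>\dim\mathcal{N}$ (otherwise swap the roles, using symmetry $\theta(\mathcal{M},\mathcal{N})=\theta(\mathcal{N},\mathcal{M})$, which is immediate from the projector formulation). The idea is to produce a unit vector $x$ on which the difference of projectors acts as the identity. Consider the restriction $P_\mathcal{N}\big|_\mathcal{M}\colon\mathcal{M}\to\mathcal{N}$: its codomain has strictly smaller dimension than its domain, so by rank--nullity it has a nontrivial kernel. Pick a unit $x$ in that kernel; then $P_\mathcal{M}x=x$ while $P_\mathcal{N}x=0$, hence $(P_\mathcal{M}-P_\mathcal{N})x=x$ and
\[
\theta(\mathcal{M},\mathcal{N}) \;=\; \|P_\mathcal{M}-P_\mathcal{N}\| \;\ge\; 1.
\]
Combined with the uniform upper bound $\theta(\mathcal{M},\mathcal{N})\le 1$ recorded just after the definition of $\theta$, this forces equality.

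No real obstacle is anticipated: both parts collapse to a one-line linear algebra argument once the gap is written as $\|P_\mathcal{M}-P_\mathcal{N}\|$. The only mildly delicate ingredient is the uniform bound $\theta\le 1$ invoked in part~(ii), but this has already been stated in the excerpt and also follows from the fact that $P_\mathcal{M}-P_\mathcal{N}$ is self-adjoint with spectrum contained in $[-1,1]$.
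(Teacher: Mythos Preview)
Your proof is correct. Note, however, that the paper does not actually prove this proposition: it is stated as a known fact (in the spirit of the surrounding references to \cite{GLR86,K66}) and no argument is given, so there is nothing to compare against. Your choice $y=P_{\mathcal{N}}x$ in part~(i) and the rank--nullity argument producing a unit $x\in\mathcal{M}\cap\ker P_{\mathcal{N}}$ in part~(ii) are exactly the standard proofs one finds in the literature; your observation that the equal-dimension hypothesis in~(i) is not actually used is also correct.
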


The above result shows us that the gap is often not useful to consider when  $\dim(\mathcal{M})\neq\dim(\mathcal{N})$.
In our theorem, we wish to find bounds on the kernels of the matrices, however, the dimension of the kernels are, in general, not equal. 

The gap provides many useful results in providing a variety of bounds but the
usefulness is limited to when the dimensions are equal. The concept of a semi-gap can be helpful when the dimensions are not equal. This advantage is highly useful when considering matrix
perturbations.

Let $\mathcal{M},\mathcal{N}$ be subspaces of $\mathbb{C}^n$. The quantity
	$$\theta_0(\mathcal{M},\mathcal{N})=\underset{\begin{smallmatrix}
		x\in\mathcal{M}\\ \|x\|=1
		\end{smallmatrix}}{\sup}\underset{y\in\mathcal{N}}{\inf}\|x-y\|$$
	is called the \textit{semigap} (or one-sided gap) from $\mathcal{M}$ to $\mathcal{N}$.

We notice some immediate properties of the semi-gap.

\begin{lemma}\label{semigap}
	Let $\mathcal{M},\mathcal{N}\subset \mathbb{C}^n$ be two subspaces. Then the following statements hold.
	\begin{itemize}
		\item [(i)] $\theta(\mathcal{M},\mathcal{N})=\max\{\theta_0(\mathcal{M},\mathcal{N}),\theta_0(\mathcal{N},\mathcal{M})\}$.
		\item[(ii)] $\theta_0(\mathcal{M},\mathcal{N})=\underset{\begin{smallmatrix}
			x\in\mathcal{M}\\ \|x\|=1
			\end{smallmatrix}}{\sup}{}\|x-P_\mathcal{N}x\|$.
		\item [(iii)]If $\mathcal{N}_1\subset\mathcal{N}_2$, then $\theta_0(\mathcal{M},\mathcal{N}_2)\leq \theta_0(\mathcal{M},\mathcal{N}_1)$,  $\theta_0(\mathcal{N}_1,\mathcal{M})\leq \theta_0(\mathcal{N}_2,\mathcal{M})$.
		\item[(iv)] $\theta_0(\mathcal{M},\mathcal{N})\leq 1$.
		\item[(v)] If $\dim \mathcal{M}>\dim\mathcal{N}$, then $\theta_0(\mathcal{M},\mathcal{N})=1$.
		\item[(vi)] $\theta_0(\mathcal{M},\mathcal{N})<1$ if and only if $\mathcal{M}\cap\mathcal{N}^\perp=\emptyset$.
	\end{itemize} 
\end{lemma}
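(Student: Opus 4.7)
My plan is to treat the six items in increasing order of difficulty, reducing everything to the definition of $\theta_0$ together with two standard facts: (a) the orthogonal projection $P_\mathcal{N}x$ minimizes the distance from $x$ to $\mathcal{N}$, and (b) the Pythagorean identity $\|x\|^2=\|P_\mathcal{N}x\|^2+\|x-P_\mathcal{N}x\|^2$. With these in hand, items (i)--(v) are essentially one-liners.

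For (i), the max formulation of $\theta$ given in the definition already writes the gap as the maximum of two suprema, which are precisely $\theta_0(\mathcal{M},\mathcal{N})$ and $\theta_0(\mathcal{N},\mathcal{M})$. Item (ii) follows by replacing the inner $\inf_{y\in\mathcal{N}}\|x-y\|$ in the definition of $\theta_0$ by $\|x-P_\mathcal{N}x\|$, using (a). For (iii), enlarging the set over which an infimum is taken can only decrease it, and enlarging the set over which a supremum is taken can only increase it; applying each observation once yields the two monotonicity statements. For (iv), pick $y=0\in\mathcal{N}$ to bound the inner inf by $\|x\|=1$. For (v), a dimension count gives a nonzero vector $x\in\mathcal{M}\cap\mathcal{N}^\perp$; after normalization, $P_\mathcal{N}x=0$ and $\|x-P_\mathcal{N}x\|=1$, so combined with (iv) this forces $\theta_0(\mathcal{M},\mathcal{N})=1$.

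The main substantive step is (vi), which will occupy the bulk of the proof. The ``only if'' direction reuses the argument of (v): any unit vector in $\mathcal{M}\cap\mathcal{N}^\perp$ witnesses $\theta_0(\mathcal{M},\mathcal{N})=1$. For the ``if'' direction, assume $\mathcal{M}\cap\mathcal{N}^\perp=\{0\}$ and consider the unit sphere $S_\mathcal{M}=\{x\in\mathcal{M}:\|x\|=1\}$. By (b), for every $x\in S_\mathcal{M}$,
\[
\|x-P_\mathcal{N}x\|^2=1-\|P_\mathcal{N}x\|^2,
\]
which is strictly less than $1$ because $x\notin\mathcal{N}^\perp$ forces $P_\mathcal{N}x\ne 0$. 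The function $x\mapsto\|x-P_\mathcal{N}x\|$ is continuous and $S_\mathcal{M}$ is compact in the finite-dimensional space $\mathbb{C}^n$, so the supremum is attained and hence is strictly less than $1$. By (ii) this supremum equals $\theta_0(\mathcal{M},\mathcal{N})$, finishing the proof.

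The only place I expect any real difficulty is the ``if'' direction of (vi), where the key move is to pass from a strict pointwise inequality to a strict supremum by invoking compactness of the unit sphere in finite dimensions; everywhere else the claim is a direct unwinding of definitions.
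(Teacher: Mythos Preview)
Your proof is correct. Note, however, that the paper does not actually prove this lemma: immediately after the statement it writes ``These facts are well-known and can be found e.g.\ in \cite{GLR86,K66}.'' So there is no paper proof to compare against; you have supplied a self-contained argument where the authors chose to cite.

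Two minor remarks. First, the paper's statement of (vi) reads $\mathcal{M}\cap\mathcal{N}^\perp=\emptyset$, which is literally impossible since both are subspaces and contain $0$; you correctly interpreted and proved the intended condition $\mathcal{M}\cap\mathcal{N}^\perp=\{0\}$. Second, your compactness argument for the ``if'' direction of (vi) tacitly assumes $\mathcal{M}\neq\{0\}$ so that the unit sphere $S_\mathcal{M}$ is nonempty; the degenerate case $\mathcal{M}=\{0\}$ is trivial (the supremum over the empty set is $0$), but strictly speaking it should be noted separately.
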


These facts are well-known and can be found e.g. in \cite{GLR86,K66}.

 To this end we will
need some new results on gap and semigap. These results will be derived next.

\begin{proposition}\label{kernel}
	Let $A_0$ be fixed. Then, there exist $\epsilon, K>0$ such that for all $A$ with $\|A-A_0\|<\epsilon$, we have
	\begin{equation}\label{ker}
	\theta_0(\ker (A),\ker (A_0))\leq K\|A-A_0\|.
	\end{equation}
\end{proposition}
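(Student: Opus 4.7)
The plan is to convert the semigap into a quantity I can estimate directly. By Lemma~\ref{semigap}(ii),
$$\theta_0(\ker A,\ker A_0)=\sup_{\substack{x\in\ker A\\ \|x\|=1}}\|x-P_{\ker A_0}x\|,$$
so it suffices to bound $\|x-P_{\ker A_0}x\|$ uniformly over unit vectors $x\in\ker A$. The key observation I would exploit is that $Ax=0$ forces $A_0x=(A_0-A)x$, whence $\|A_0x\|\le\|A-A_0\|$ on the unit sphere of $\ker A$. Thus being in the kernel of $A$ translates into $x$ being mapped to something small by $A_0$.

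Next I would decompose $x=x_1+x_2$ with $x_1=P_{\ker A_0}x\in\ker A_0$ and $x_2\in(\ker A_0)^\perp$, so that $A_0x=A_0x_2$ and $\|x-P_{\ker A_0}x\|=\|x_2\|$. The restriction of $A_0$ to $(\ker A_0)^\perp$ is injective on a finite dimensional space, hence bounded below by $\sigma:=\sigma_{\min}^+(A_0)>0$, the smallest nonzero singular value of $A_0$; this constant depends only on $A_0$, which is exactly the form required by the proposition. Chaining the two inequalities,
$$\|x_2\|\le\frac{\|A_0x_2\|}{\sigma}=\frac{\|A_0x\|}{\sigma}\le\frac{\|A-A_0\|}{\sigma},$$
so setting $K=1/\sigma$ yields~\eqref{ker}.

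A separate word is needed only for degenerate cases: if $A_0=0$ then $\ker A_0=\mathbb{C}^n$ and the semigap is identically $0$, while if $A_0$ is invertible then $\ker A_0=\{0\}$ and choosing $\epsilon<\sigma_{\min}(A_0)$ forces $A$ to be invertible as well, so $\ker A=\{0\}$ and the semigap again vanishes. In the generic intermediate case $\sigma_{\min}^+(A_0)$ is a well-defined positive number and no additional work is needed.

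The main subtlety, rather than a real obstacle, is to notice that one needs no control on $A$ beyond $\|A-A_0\|$: the bound-below constant $\sigma_{\min}^+(A_0)$ is intrinsic to $A_0$. This is precisely why the statement is phrased with the semigap rather than the gap: since we only approximate $x\in\ker A$ by some $y\in\ker A_0$, we may take $y=P_{\ker A_0}x$ and never worry about the reverse direction, where the possible mismatch between $\dim\ker A$ and $\dim\ker A_0$ would by Lemma~\ref{semigap}(v) force the corresponding one-sided gap all the way up to $1$.
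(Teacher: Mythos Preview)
Your proof is correct and is in fact cleaner than the paper's own route. The paper does not argue directly on $\ker A$: instead it first establishes Lemma~\ref{image}, namely $\theta_0(\mathrm{Im}(A_0),\mathrm{Im}(A))\le K\|A-A_0\|$, by picking preimages $f_i$ of an orthonormal basis of $\mathrm{Im}(A_0)$ and pushing them through $A$; it then proves the duality Lemma~\ref{ortho}, $\theta_0(\mathcal M,\mathcal N)=\theta_0(\mathcal N^\perp,\mathcal M^\perp)$, and combines the two via $\ker(A)=\mathrm{Im}(A^*)^\perp$ to obtain~\eqref{ker}. Your argument bypasses both auxiliary lemmas: from $Ax=0$ you read off $\|A_0x\|\le\|A-A_0\|$, project orthogonally onto $\ker A_0$, and use that $A_0$ is bounded below on $(\ker A_0)^\perp$ by the smallest nonzero singular value. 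This yields the explicit constant $K=1/\sigma_{\min}^+(A_0)$, whereas the paper's constant is left as $k\max_j|\alpha_j|\max_i\|f_i\|$ coming from a choice of preimages. The trade-off is modularity: the paper's Lemmas~\ref{image} and~\ref{ortho} are statements of independent interest, while your approach is a one-shot estimate tailored to this proposition. Either way the degenerate cases ($A_0=0$ or $A_0$ invertible) are handled correctly, and indeed your general bound already forces $\ker A=\{0\}$ once $\|A-A_0\|<\sigma_{\min}(A_0)$, so the case split is optional.
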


Since $\ker(A)\oplus\mathrm{Im}(A^\top ) = \mathbb{C}^n$, often times it is easier to prove a result for the image rather than for
the kernel. Because of this, the above proposition will follow from the next results.

\begin{lemma}\label{image}
	Let $A_0$ be fixed. Then, there exist $\epsilon, K>0$ such that for all $A$ with $\|A-A_0\|<\epsilon$, we have
	\begin{equation*}
	\theta_0(\mathrm{Im} (A_0),\mathrm{Im} (A))\leq K\|A-A_0\|.
	\end{equation*}
\end{lemma}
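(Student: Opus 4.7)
My plan is to produce, for each unit vector $x\in\mathrm{Im}(A_0)$, an explicit $y\in\mathrm{Im}(A)$ with $\|x-y\|$ linear in $\|A-A_0\|$, and then take the supremum over $x$. The construction is essentially forced: if I can write $x=A_0 z$ for some preimage $z$, the choice $y:=Az$ lies automatically in $\mathrm{Im}(A)$ and satisfies
$$\|x-y\|=\|A_0 z - Az\|\le \|A-A_0\|\cdot\|z\|,$$
so the whole problem collapses to bounding the norm of a suitable preimage $z$ uniformly in $x$.

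To control $\|z\|$ I would invoke the SVD (equivalently the Moore--Penrose pseudoinverse) of $A_0$. Writing $A_0=U\Sigma V^*$ with nonzero singular values $\sigma_1\ge\cdots\ge\sigma_r>0$ and expanding $x=\sum_{i=1}^r \alpha_i u_i$ in the left-singular basis, the minimum-norm preimage $z:=\sum_{i=1}^r (\alpha_i/\sigma_i)v_i$ solves $A_0 z=x$ and satisfies $\|z\|\le \|x\|/\sigma_r=1/\sigma_r$. Plugging this back gives
$$\|x-y\|\le \frac{\|A-A_0\|}{\sigma_r(A_0)},$$
so I may take $K=1/\sigma_r(A_0)$; this constant depends only on $A_0$, and $\epsilon$ can in fact be arbitrary.

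There is essentially no obstacle here: the bound is linear rather than H\"older, $A$ does not need to be close to $A_0$ for the inequality itself, and the degenerate case $A_0=0$ is vacuous since then $\mathrm{Im}(A_0)=\{0\}$ and the semigap is zero. The only bookkeeping point I want to stress is that the relevant quantity $\sigma_r(A_0)$ is the smallest \emph{nonzero} singular value of $A_0$; recognizing this, rather than attempting to invert $A_0$ on all of $\mathbb{C}^n$, is what keeps the argument valid in the rank-deficient case, which is precisely the case needed to later deduce Proposition \ref{kernel} by passing from images to kernels via orthogonal complements.
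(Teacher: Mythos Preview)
Your argument is correct and follows the same core idea as the paper: write $x=A_0 z$ for a controlled preimage $z$, set $y:=Az\in\mathrm{Im}(A)$, and bound $\|x-y\|\le\|A-A_0\|\,\|z\|$. The difference is in how the preimage norm is controlled. The paper fixes once and for all an orthonormal basis $g_1,\dots,g_k$ of $\mathrm{Im}(A_0)$ together with arbitrary preimages $f_i$ satisfying $A_0 f_i=g_i$, then for a general unit $x=\sum\alpha_i g_i$ takes $y=\sum\alpha_i Af_i$ and arrives at the constant $K=k\max_i\|f_i\|$ (using $|\alpha_i|\le 1$). You instead invoke the pseudoinverse to pick, for each $x$, the minimum-norm preimage, obtaining the cleaner and sharper constant $K=1/\sigma_r(A_0)$. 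Your route is slightly less elementary in that it appeals to the SVD, but it avoids the basis bookkeeping and makes transparent that no smallness of $\|A-A_0\|$ is actually needed for the inequality itself.
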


\begin{proof}
	Let $A_0$ be an $n\times n$-matrix with $\dim(\mathrm{Im}(A_0))=k$. Consider an orthonormal basis $g_1,g_2,\dots,g_k$ of $\mathrm{Im}(A_0)$. That is there are $f_1,f_2,\dots,f_k$ such that $g_i=A_0f_i$ for $i=1,\dots,k$. Define $h_i=Af_i$, so that $h_i\in\mathrm{Im}(A)$ for all $i$. Then, we have that
	$$\|h_i-g_i\|=\|Af_i-A_0f_i\|=\|(A-A_0)f_i\|\leq\|f_i\| \|A-A_0\|.$$
	Now, let $x\in\mathrm{Im}(A_0)$ and $\|x\|=1$. This means that $x=\alpha_1g_1+\dots\alpha_kg_k$ for some $\alpha_i$'s. Define $y=\alpha_1h_1+\dots\alpha_kh_k$. Clearly, $y\in\mathrm{Im}(A)$ and
	\begin{align*}
	\inf_{z\in\mathrm{Im}A}\|x-z\|\leq\|x-y\|&\leq\sum_{i=1}^{k}\alpha_i\|h_i-g_i\|\\
	&\leq\underset{j}{\max}(|\alpha_j|)\sum_{i=1}^k\|f_i\| \|A-A_0\|\\
	&\leq k\underset{j}{\max}(|\alpha_j|)\underset{i}{\max}\|f_i\| \|A-A_0\|.
	\end{align*}
	Therefore, by taking the supremum over $x$ we arrive at $	\theta_0(\mathrm{Im} (A_0),\mathrm{Im} (A))\leq  K\|A-A_0\|$, where $K=k\underset{j}{\max}(|\alpha_j|)\underset{i}{\max}\|f_i\| $.
\end{proof}

Next, in order to show that Lemma~\ref{image} implies Proposition~\ref{kernel}, we need the following result.

\begin{lemma}\label{ortho}
	Let  $\mathcal{M},\mathcal{N}$ be subspaces of $\mathbb{C}^n$. Then we have
	\begin{equation}\label{gap}
	\theta_0(\mathcal{M},\mathcal{N})=\theta_0(\mathcal{N}^\perp,\mathcal{M}^\perp).
	\end{equation}
\end{lemma}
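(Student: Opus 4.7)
The plan is to rewrite each semigap as an operator norm of a product of orthogonal projectors, and then exploit the identity $\|A\|=\|A^*\|$ together with the self-adjointness of orthogonal projectors.

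First I would use item~(ii) of Lemma~\ref{semigap} and the identity $I-P_\mathcal{N}=P_{\mathcal{N}^\perp}$ to write
\[
\theta_0(\mathcal{M},\mathcal{N})=\sup_{x\in\mathcal{M},\,\|x\|=1}\|P_{\mathcal{N}^\perp}x\|.
\]
Since $P_\mathcal{M} x=x$ for $x\in\mathcal{M}$, this supremum is at most $\|P_{\mathcal{N}^\perp}P_\mathcal{M}\|$. Conversely, for any unit vector $v\in\mathbb{C}^n$ with $P_\mathcal{M} v\neq 0$, the unit vector $x:=P_\mathcal{M} v/\|P_\mathcal{M} v\|$ lies in $\mathcal{M}$ and satisfies $\|P_{\mathcal{N}^\perp}P_\mathcal{M} v\|=\|P_\mathcal{M} v\|\cdot\|P_{\mathcal{N}^\perp}x\|\leq\|P_{\mathcal{N}^\perp}x\|$, giving the reverse inequality. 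Thus
\[
\theta_0(\mathcal{M},\mathcal{N})=\|P_{\mathcal{N}^\perp}P_\mathcal{M}\|.
\]

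Next, I would apply the same identity to the pair $(\mathcal{N}^\perp,\mathcal{M}^\perp)$, using $(\mathcal{M}^\perp)^\perp=\mathcal{M}$, to obtain
\[
\theta_0(\mathcal{N}^\perp,\mathcal{M}^\perp)=\|P_\mathcal{M} P_{\mathcal{N}^\perp}\|.
\]
Finally, because orthogonal projectors are self-adjoint, $(P_{\mathcal{N}^\perp}P_\mathcal{M})^*=P_\mathcal{M}^*P_{\mathcal{N}^\perp}^*=P_\mathcal{M} P_{\mathcal{N}^\perp}$, and since any operator and its adjoint share the same operator norm, the two expressions above are equal, which is exactly \eqref{gap}.

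There is no serious obstacle here; the only mildly technical step is the passage from the supremum over unit vectors in $\mathcal{M}$ to the full operator norm $\|P_{\mathcal{N}^\perp}P_\mathcal{M}\|$, which relies only on the fact that $P_\mathcal{M}$ is a contraction onto $\mathcal{M}$. Everything else is a one-line adjoint computation.
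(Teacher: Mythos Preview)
Your proof is correct and takes a genuinely different route from the paper. You express the semigap as the operator norm $\theta_0(\mathcal{M},\mathcal{N})=\|P_{\mathcal{N}^\perp}P_\mathcal{M}\|$ and then close with the adjoint identity $\|A\|=\|A^*\|$; this is a clean, self-contained two-line computation once the projector formula is established. The paper instead argues by case analysis: it first disposes of the degenerate cases $\dim\mathcal{M}>\dim\mathcal{N}$ or $\theta_0(\mathcal{M},\mathcal{N})=1$, then in the remaining case projects $\mathcal{M}$ into $\mathcal{N}$ to obtain a subspace $P\subset\mathcal{N}$ with $\dim P=\dim\mathcal{M}$, invokes the full-gap identity $\theta(\mathcal{M},P)=\theta(P^\perp,\mathcal{M}^\perp)$ of Proposition~\ref{orthogap}, and combines it with the monotonicity properties of $\theta_0$ from Lemma~\ref{semigap}(iii) to obtain one inequality; the reverse inequality follows by symmetry. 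Your approach is more elementary (it needs no dimension bookkeeping and does not depend on Proposition~\ref{orthogap}), while the paper's approach has the virtue of deriving the semigap identity as a consequence of the already-established gap identity, keeping the theory self-referential.
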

\begin{proof}
	First, notice that if $\dim\mathcal{M}> \dim \mathcal{N}$ or $\theta_0(M,N) =1$, the result follows immediately.
	Now, we consider the case where $\dim\mathcal{M}\leq \dim \mathcal{N}$  and $\theta_0(M,N) < 1$. Define $P$ to be the subspace of all the projection of vectors in $\mathcal{M}$ to $\mathcal{N}$, i.e. $P=\text{proj}_\mathcal{N} \mathcal{M}$.  Since  $\theta_0(M,N) < 1$, we have that $\dim P=\dim\mathcal{M}$. Additionally, we have that $\theta(M,P) =\theta_0(M,N) $. Recall that $P\subset \mathcal{N}$ implies that $\mathcal{N}^\perp\subset P^\perp$. By Proposition~\ref{orthogap} we get $\theta(\mathcal{M},P) =\theta(P^\perp,\mathcal{M}^\perp) $. Then, using Lemma~\ref{semigap}, we have $\theta(P^\perp,\mathcal{M}^\perp) \geq\theta_0(P^\perp,\mathcal{M}^\perp) =\theta(\mathcal{N}^\perp,\mathcal{M}^\perp) $. That is, 
	$$\theta(\mathcal{N}^\perp,\mathcal{M}^\perp)\leq \theta(\mathcal{M},\mathcal{N}).
	$$ Now, repeating the argument for $\mathcal{N}^\perp$ and $\mathcal{M}^\perp$ gives us the result.
\end{proof}

Recall that $\ker(A)=\mathrm{Im}(A^*)^\perp$ and $\mathrm{Im}(A)=\ker(A^*)^\perp$. So, by combining Lemma~\ref{image} and Lemma~\ref{ortho}, we see that Proposition~\ref{kernel} holds true.


\section{Backward Stability}

In this section we finalize the proof of Theorem~\ref{main1}.

\begin{proof}[Proof of Theorem~\ref{main1}]
	Let $A_0$ be given. Then, according to Proposition~\ref{kernel} there exists $K,\epsilon> 0$ such that \eqref{ker} holds.
	Let $A$ be a matrix such that $\|A-A_0\|<\epsilon$ and $A$ has Schur decomposition $A = UTU^*$.
	Let $\lambda_i$'s denote the eigenvalues of $A_0$ and $\mu_j$'s denote the eigenvalues of $A$.
	In the Schur decomposition for $A$, we have a sequence of unitary Hessenberg matrices $\{V_k\}_1^n$ such that $U = V_1\cdot\ldots\cdot V_n$.
	Since the Schur decomposition construction is an iteration of stepsas was shown in Lemma~\ref{schurhess}, it is enough to show that we can obtain a bound
	on the first step.
	Let $\mu_1$ be an eigenvalue of $A$ and let $V_1$ be the unitary matrix such that
	\begin{equation*}
	V_1^*AV_1=\left[\begin{array}{c|c}
	\mu_1&\begin{matrix}
	\star&\cdots&\star
	\end{matrix}\\ \hline\\
	\begin{matrix}
	0\\ \vdots \\0
	\end{matrix}&\text{\huge $A_2$}
	\end{array}\right]
	\end{equation*} 
	where the first column $v_1$ of $V_1$ is a unit eigenvector of $A$ corresponding to eigenvalue $\mu_1$, and the matrix forms an orthogonal basis for $\mathbb{C}^n$.
	
	Thus, by Proposition~\ref{kernel} we can find a unit vector $u_1\in\ker(A_0-\lambda_1I)$ and constants $K_i$'s such that $\|v_1-u_1\|\leq K_0\|A-A_0+(\lambda_1-\mu_1)I\|\leq K_0\|A-A_0\|+K_1|\lambda_1-\mu_1|\leq K_0\|A-A_0\|+\widetilde{K}_1\|A-A_0\|^{\alpha}\leq K_2\|A-A_0\|^\alpha$ where $\alpha$ is either 1 or ${1/n}$.
	
	Hence, we can find  a corresponding orthonormal basis forming $U_1$ such that  $\|v_i-u_i\|\leq K_i\|A-A_0\|^\alpha$.
	Therefore, we have that $\|V_1-U_1\|\leq M_1\|A-A_0\|^\alpha$ and by construction, we have that
	\begin{equation*}
	U_1^*A_0U_1=\left[\begin{array}{c|c}
	\lambda_1&\begin{matrix}
	\star&\cdots&\star
	\end{matrix}\\ \hline\\
	\begin{matrix}
	0\\ \vdots \\0
	\end{matrix}&\text{\huge $A_{0,2}$}
	\end{array}\right]
	\end{equation*}
	
	Repeating the process the same way we did in the proof of Lemma~\ref{hessfactor}, we acquire a unitary matrix $U_0=U_1\cdot\ldots\cdot U_{n}$ such that  $\|V_i-U_i\|\leq M_i\|A_i-A_{0,i}\|^\alpha$ for all $i=1,\dots,n$.
	
	Note that by construction
	$$\|A_{i+1}-A_{0,i+1}\|\leq\|V_i^*A_iV_i-U_i^*A_{0,i}U_i\|\le\|V_i^*A_iV_i-V_i^*A_{0,i}V_i+V_i^*A_{0,i}V_i-$$
	$$-V_i^*A_{0,i}U_{i}+V_i^*A_{0,i}U_{i}-U_i^*A_{0,i}U_i\|\leq\|V_i^*A_iV_i-V_i^*A_{0,i}V_i\|+\|V_i^*A_{0,i}V_i-$$
	$$-V_i^*A_{0,i}U_{0,i}\|+\|V_i^*A_{0,i}U_{0,i}-U_i^*A_{0,i}U_i\|\leq\|A_i-A_{0,i}\|+\|A_{0,i}\|\|V_i-$$
	$$-U_{0,i}\|+\|V_i^*-U_i^*\|\|A_{0,i}\|\leq \|A_i-A_{0,i}\|+2M_i\|A_{0,i}\|\|A_i-A_{0,i}\|^\alpha\leq \widetilde{M}_i\|A-A_0\|^\alpha.$$
	It follows by induction that  $\|U-U_0\|\leq M\|A-A_0\|^{1/n}$  and $M:=\sum_i \widetilde{M}_i$ with $\widetilde{M}_i$ depending only on $A_0$.
	
	Next, let us consider $T-T_0$. Using the argument similar to above, we conclude that
	$$\|T-T_0\|=\|U^*AU-U_0^*A_{0}U_0\|\leq\widehat{M}\|A-A_0\|,$$
	where $\widehat{M}$ depends only on $A_0$.
	
	Hence, we arrive at the conclusion of one of our main results, i.e. formula~\eqref{eqback} holds true with $K=M+\widehat{M}$.
\end{proof}


\section{Different GK numbers and Failure of the Forward Stability of the Schur Decomposition}

As it turns out the GK numbers of the original and perturbed matrices give us the  information whether the forward stability of the Schur decomposition is impossible. The intuition behind the non-stable case  comes from the following fact.

\begin{proposition}[see \cite{O89}]
    We have the inequality
    $$\inf\text{\rm dist}(\text{\rm Inv} A, \text{\rm Inv} A_0)>0$$
    where the infimum is taken over all possible pairs of $A,A_0$, having different GK numbers.
\end{proposition}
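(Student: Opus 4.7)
The plan is to prove the contrapositive: if $\text{dist}(\text{Inv}\,A, \text{Inv}\,A_0)$ is sufficiently small, then the Gohberg--Kaashoek numbers of $A$ and $A_0$ must coincide. The strategy is to reconstruct the GK numbers from the set $\text{Inv}(A)$ in a way that is stable under small Hausdorff perturbations of that set (as distinct from small perturbations of the matrix $A$, which is the subject of the rest of the paper). I would proceed by contradiction, supposing a sequence of pairs $(A^{(k)}, A_0^{(k)})$ with different GK numbers satisfies $\text{dist}(\text{Inv}\,A^{(k)}, \text{Inv}\,A_0^{(k)}) \to 0$, and deriving a contradiction by extracting GK data.

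The first step recovers the spectrum and the algebraic multiplicities. Since one-dimensional invariant subspaces are exactly eigendirections, $\text{Inv}_1(A) = \bigsqcup_{\mu \in \sigma(A)} \mathbb{P}(\ker(A - \mu I))$; small Hausdorff distance forces these unions of projective subvarieties to be mutually close, and combined with Propositions~\ref{dimeq} and \ref{eigstab} this pairs up the spectra asymptotically. The top generalized eigenspace $\ker(A - \lambda I)^n$ is invariant of dimension equal to the algebraic multiplicity of $\lambda$; Hausdorff proximity together with Lemma~\ref{semigap}(v) (which forces equal dimensions when the gap drops below $1$) matches algebraic multiplicities at paired eigenvalues.

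The second step recovers the full Jordan structure. For each paired eigenvalue $\lambda$ and each $j \geq 1$, the generalized kernel $K_j(A, \lambda) := \ker(A - \lambda I)^j$ is invariant of dimension $\sum_i \min(m_i(A, \lambda), j)$, and the dual GK numbers satisfy $k_j(A, \lambda) = \dim K_j(A, \lambda) - \dim K_{j-1}(A, \lambda)$, so knowing these dimensions for all $(\lambda, j)$ recovers the entire GK data. By Hausdorff proximity and Lemma~\ref{semigap}(v) applied in both directions, there exists an invariant subspace of $A_0$ of the same dimension within vanishingly small gap of $K_j(A, \lambda)$.

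\textbf{The main obstacle} is to identify this matched subspace as precisely $K_j(A_0, \lambda)$, since the lattice $\text{Inv}(A_0)$ in general contains many invariant subspaces of a given dimension. I would resolve this using the spectral decomposition $\mathbb{C}^n = \bigoplus_{\lambda' \in \sigma(A_0)} K_\infty(A_0, \lambda')$ to localize the matched subspace inside $K_\infty(A_0, \lambda)$ (leakage into another generalized eigenspace would be incompatible with the small-gap condition once the spectra have been separated), followed by induction on $j$ starting from $j = 1$, where the geometric eigenspace is pinned down by the dimension matching in Step~1. Equating $\dim K_j$ at every $(\lambda, j)$ forces equality of all dual GK numbers, hence of the original GK numbers, contradicting the hypothesis and establishing the positivity of the infimum.
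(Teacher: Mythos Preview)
The paper does not prove this proposition; it is quoted with the citation ``see \cite{O89}'' and then used only as motivation for the failure of forward stability. There is therefore no proof in the paper to compare your attempt against.

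On your proposal itself: the overall strategy---extract the GK numbers from $\text{Inv}(A)$ in a way that is stable under small Hausdorff perturbation---is the right idea and is essentially how the result in \cite{O89} goes. Two points in your sketch need repair, however. First, your appeal to Propositions~\ref{dimeq} and~\ref{eigstab} is misplaced: those control eigenvalue displacement in terms of $\|A-A_0\|$, but the hypothesis here gives you no control over $\|A-A_0\|$ whatsoever (indeed $A$ and $cA+dI$ have identical invariant-subspace sets and identical GK numbers while $\|A-(cA+dI)\|$ can be anything). The pairing of spectra has to come purely from the lattice $\text{Inv}$, not from matrix proximity. Second, the statement asserts a \emph{uniform} positive lower bound over all pairs with different GK numbers; your sequence argument is the right setup, but to close it you must normalize (via $A\mapsto cA+dI$ and unitary conjugation, both of which preserve $\text{Inv}$ and the GK numbers) to land in a compact family and invoke the finiteness of GK types for fixed $n$. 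Finally, your resolution of the ``main obstacle'' is too thin as written: a cleaner route is to characterize $K_j$ intrinsically in the lattice (for instance $K_1$ is the join of all one-dimensional invariant subspaces, and one proceeds inductively), so that the matched subspace is forced to be $K_j(A_0,\lambda_0)$ rather than merely some invariant subspace of the correct dimension.
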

That is why we got the backward stability result and could not get the general result for forward stability.

\begin{lemma}\label{1diffGK}
    Let $A_0\in\mathbb{C}^{n\times n}$ and $A_0=U_0T_0U_0^*$ its fixed Schur decomposition, where the (1,1)-entry of $T_0$ is an eigenvalue $\lambda$ with $\dim\text{\rm Ker}(A_0-\lambda I)\ge2$. There exists $M>0$ such that in any neighborhood of $A_0$,, i.e. $\{A:\|A-A_0\|<\varepsilon\}$ for any $\varepsilon>0$,
\begin{equation}
 \underset{A}{\sup}\ \underset{\begin{array}{c}U,T\\\text{\tiny Schur Form}\\\text{\tiny of $A$}\end{array}}{\inf}\ {\|U-U_0\|+\|T-T_0\|}>M>0,
\end{equation}  where the supremum is taken over all $A\in\mathcal{U}$ not having an eigenvector close to the first column $u_1$ of $U_0$, i.e. we have $\|u_1-v\|>M$ for all $v$, eigenvectors of $A$, and the infimum is taken over all their Schur factorizations.

\end{lemma}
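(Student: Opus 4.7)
The plan is to exhibit, in every $\varepsilon$-ball around $A_0$, a specific perturbation $A$ all of whose unit eigenvectors lie at uniform positive distance from the direction of $u_1$. Once such $A$ is available, any Schur factorization $A=UTU^*$ has first column $Ue_1$ equal to a unit eigenvector of $A$, so $\|U-U_0\|\ge\|Ue_1-u_1\|>M$, giving the required bound (the $\|T-T_0\|$ term only helps).

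To build $A$, we invoke $\dim\ker(A_0-\lambda I)\ge 2$: let $W=\ker(A_0-\lambda I)$ and pick a unit vector $q_1\in W\cap u_1^\perp$. By Lemma~\ref{basisfromvector}, combined with a Gram-Schmidt correction of chain-bottoms if needed, we can arrange a Jordan basis of $A_0$ in which $u_1$ and $q_1$ serve as the bottoms of two distinct chains; let $\{q_1,q_2,\dots,q_{\ell'}\}$ be the chain through $q_1$, so $(A_0-\lambda I)q_{j+1}=q_j$, and $q_{\ell'}\notin\mathrm{Im}(A_0-\lambda I)$ because $q_{\ell'}$ is the top of a chain. Define the rank-one perturbation
$$
A\ :=\ A_0+\varepsilon\,q_{\ell'}u_1^*,
$$
which satisfies $\|A-A_0\|\le\varepsilon\|q_{\ell'}\|$ and $Au_1=\lambda u_1+\varepsilon q_{\ell'}$, so $u_1$ is not an eigenvector of $A$.

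It remains to show that every unit eigenvector $v$ of $A$ stays bounded away from $u_1$ by some $M>0$ independent of $\varepsilon$. Split by the eigenvalue $\mu$ of $v$: (i) If $\mu=\lambda$, the equation $(A_0-\lambda I)v=-\varepsilon(u_1^*v)q_{\ell'}$ combined with $q_{\ell'}\notin\mathrm{Im}(A_0-\lambda I)$ forces $u_1^*v=0$, so $v\in W\cap u_1^\perp$ and $v\perp u_1$. (ii) If $\mu$ is close to but distinct from $\lambda$, projecting to the generalized $\lambda$-eigenspace $V$ of $A_0$ and writing $\eta=\lambda-\mu$, $N_0=(A_0-\lambda I)|_V$, one uses
$$
(N_0+\eta I)^{-1}q_{\ell'}\ =\ \sum_{k=0}^{\ell'-1}(-1)^k\eta^{-k-1}q_{\ell'-k}
$$
to show that the normalized eigenvector direction converges to $q_1/\|q_1\|$ as $\varepsilon\to 0$, which is orthogonal to $u_1$. (iii) If $\mu$ is bounded away from $\lambda$, then $v$ is a perturbation of an eigenvector of $A_0$ at another eigenvalue, which sits at positive distance from $u_1\in W$ since distinct generalized eigenspaces of $A_0$ intersect trivially. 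Taking $M$ smaller than the infimum of these three distance bounds completes the proof.

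The main obstacle is case (ii), where one must verify that an eigenvector of $A$ associated to a slightly shifted eigenvalue does not drift back toward $u_1$ as $\varepsilon\to 0$. The deliberate choice of $q_{\ell'}$ as the \emph{top} of a Jordan chain distinct from $u_1$'s chain is what does the work: (a) it places $q_{\ell'}$ outside $\mathrm{Im}(A_0-\lambda I)$, used in case (i), and (b) it makes the highest-order pole in the resolvent expansion proportional to $\eta^{-\ell'}q_1$, so the limiting direction in case (ii) is $q_1\perp u_1$ rather than anything close to $u_1$.
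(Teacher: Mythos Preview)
Your approach is genuinely different from the paper's. The paper's argument for the lemma itself is brief: it invokes the backward stability Theorem~\ref{main1} to place every unit eigenvector $v_j$ of $A$ within $C\|A-A_0\|^{1/n}$ of some eigenvector $u_s$ of $A_0$, records (from the hypothesis defining the supremum set) that $s\neq 1$, and then reads off $\|u_1-v_j\|\ge\min_{i\ne 1}\|u_1-u_i\|-C\|A-A_0\|^{1/n}$ by the triangle inequality, setting $M=\tfrac12\min_{i\ne1}\|u_1-u_i\|$. The construction of an admissible $A$ is postponed to a separate Remark, where $A=P_0(J_0+J_\varepsilon)P_0^{-1}$ is produced by inserting a single off-diagonal entry in the Jordan form so as to merge the block carrying $u_1$ into a neighbouring $\lambda$-block. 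You instead fold lemma and remark together: you exhibit the rank-one perturbation $A=A_0+\varepsilon\,q_{\ell'}u_1^*$ and verify directly, via a three-case eigenvector analysis, that no unit eigenvector of $A$ lies near $u_1$. Your route is more self-contained (no appeal to Theorem~\ref{main1}) at the price of a longer computation; the paper's route is shorter but leans on the already-proved backward stability.

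Two technical points deserve tightening. First, the claim that one can pick a single Jordan basis in which \emph{both} $u_1$ and an orthogonal $q_1$ are chain-bottoms is not true in general: the flag $W\cap\mathrm{Im}\bigl((A_0-\lambda I)^k\bigr)$ constrains which eigenvectors can bottom chains of each length, and Lemma~\ref{basisfromvector} only treats one eigenvector at a time. Fortunately your argument never uses the chain through $u_1$; it only needs the chain $q_1,\dots,q_{\ell'}$, and for that Lemma~\ref{basisfromvector} applied to $q_1$ alone suffices. Second, cases (ii) and (iii) are correct in outline but compressed. In (ii) you should first observe that any eigenvector of $A$ for $\mu$ close to $\lambda$ automatically lies in the generalized $\lambda$-eigenspace $V$ (because the perturbation maps into $V$, so from $(A_0-\mu I)v=-\varepsilon(u_1^*v)q_{\ell'}\in V$ one gets $(A_0-\mu I)v_{V'}=0$ and hence $v_{V'}=0$); after that your resolvent expansion is legitimate. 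In (iii) the uniform positive distance comes from compactness of the unit sphere in $V'$ together with $u_1\notin V'$, rather than from orthogonality of generalized eigenspaces.
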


\begin{proof}
     Let $u_1$ be the first column of $U_0$ and $\lambda$ is the corresponding  eigenvalue of $A_0$. By Proposition~\ref{eigstab} we know that there are eigenvalues of $A$ that lie relatively close to $\lambda$ and the difference is equivalent to $\|A-A_0\|^{1/n}$. We will list those  eigenvalues as $\mu_1,\ldots,\mu_l$.
    
     Let $\{v_1,\ldots,v_{l_j}\}$ be a basis of $\text{Ker}(A-\mu_j)$ (for $j=1,\ldots, l$). Moreover, let us denote by $\{u_1,\ldots,u_{k_1(A_0)}\}$ the basis of $\text{Ker}(A_0-\lambda)$, where $u_1$ as before and for each $v_j$ there is $u_s$ ($s\ne 1$) such that $$\|v_j-u_s\|\le C\|A-A_0\|^{1/n}$$ for some positive number $C$. We can assume this by using the backward stability result proven above. Then,
     $$
     \|u_1-v_j\|\ge \|u_1-u_s\|-\|u_s-v_j\|\ge \min_{i\ne1}\|u_1-u_i\|-C\|A-A_0\|^{1/n}.
     $$
     We can always choose $A$ close to $A_0$ so $M:=\frac{\min_{i\ne1}\|u_1-u_i\|}{2}>C\|A-A_0\|^{1/n}$. Denote by $j^*$ the index that minimize the left-hand side of the above inequalities. Therefore, when supremum and infimum is taken under the conditions of this lemma
     $$
 \underset{\widetilde{A}}{\sup}\ \underset{U,T}{\inf}\ {\|U-U_0\|+\|T-T_0\|}\ge \underset{U,T:A=UTU^*}{\inf}\ {\|U-U_0\|}\ge\|u_1-v_{j^*}\|>M.$$
\end{proof}

\begin{remark}
    The assumption that we can find such $A$ in every neighborhood of $A_0$ not having an eigenvector close to the first column of $U_0$ is based on the following fact.  Consider the Jordan form of $A_0=P_0J_0P_0^{-1}$ such that the first two blocks correspond to the eigenvalue $\lambda$ with the second one having $u_1$ as the eigenvector. For any $\varepsilon>0$ take $A=P_0(J_0+J_\varepsilon) P_0^{-1}$, where $J_\varepsilon$ has the only non-zero entry equal to $\frac{\varepsilon}{\|P_0\|\|P_0^{-1}\|}$ on the $(j,j+1)$ spot ($j\times j)$ is the size of the first Jordan block in $J_0$ and the second block corresponds to the eigenvector $u_1$). Hence, $u_1$ is not an eigenvector of $A$ and $\|A-A_0\|=\|P_0J_\varepsilon P_0^{-1}\|\le\varepsilon$. So the set of $A$ that we are taking supremum over in Lemma~\ref{1diffGK} is not empty.
\end{remark}

Now let us show that the statement of Theorem~\ref{GKnonstable} is valid.

\begin{proof}[Proof of Theorem 1.6]
    First, note that having different GK numbers for $A$ and $A_0$ implies that $A_0$ is derogatory, i.e. there is an eigenvalue $\lambda$ of $A_0$ such that $\dim\text{\rm Ker}(A_0-\lambda I)\ge2$. Moreover, $A_0$ and $T_0$ are similar so they have the same GK numbers and $\dim\text{\rm Ker}(T_0-\lambda I)\ge2$ as well. Therefore, we can show the equivalent fact instead, i.e. 
         $$
 \underset{\widetilde{B}}{\sup}\ \underset{U,T}{\inf}\ {\|U-I\|+\|T-T_0\|}\ge \underset{U,T:B=UTU^*}{\inf}\ {\|U-I\|}>M.$$
    If $\lambda$ is the (1,1) entry of $T_0$ the we use Lemma~\ref{1diffGK} to get the desired result. If it is not then we are required to perform an extra step.
    
    $$
    	T_0={U}_0^*A_0{U}_0=\left[\begin{array}{c|c}
\begin{matrix}
	\star&\star&\cdots&\star&\star\\
0&	\star&\cdots&\star&\star\\
0&	0&\cdots&\star&\star\\
		\vdots&\ddots&\ddots&\ddots&\vdots\\	
		0&0&\cdots&0&\star
	\end{matrix}&\begin{matrix}
	\star&\cdots&\star\\
	\star&\cdots&\star\\
	\star&\cdots&\star\\
		\star&\cdots&\star\\
			\star&\cdots&\star\\
				\star&\cdots&\star
	\end{matrix}\\ \hline\\
\text{\huge 0}&\text{\huge ${T}_{1}$}
	\end{array}\right],
    $$
    where the first say $j$ rows do not have $\lambda$ on its main diagonal and the next row of $T_0$ is the first time we meet $\lambda$. 
    In addition, the (1,1)-entry of ${T}_1$ is $\lambda$. According to Lemma~\ref{reducingJordan}, it means that we have the same GK numbers related to $\lambda$ for $T_1$ as for $A_0$. Recall that the truncation was using eigenvectors not corresponding to $\lambda$, so $T_1$ will have the same number and length of Jordan chains for $\lambda$ as $A_0$ has. Thus, $\dim\text{\rm Ker}(T_1-\lambda I)\ge2$. Now, we use Lemma~\ref{1diffGK} to get the result for $\widetilde{A}_0$, $V_0$, and $\widetilde{T}_0$ by constructing $B_1$ (note that $B$ is not upper triangular, since $e_1$ is not its eigenvector) for $T_1$ as described by the lemma. Then define
      $$
    	A={U}_0\left[\begin{array}{c|c}
\begin{matrix}
	\star&\star&\cdots&\star&\star\\
0&	\star&\cdots&\star&\star\\
0&	0&\cdots&\star&\star\\
		\vdots&\ddots&\ddots&\ddots&\vdots\\	
		0&0&\cdots&0&\star
	\end{matrix}&\begin{matrix}
	\star&\cdots&\star\\
	\star&\cdots&\star\\
	\star&\cdots&\star\\
		\star&\cdots&\star\\
			\star&\cdots&\star\\
				\star&\cdots&\star
	\end{matrix}\\ \hline\\
\text{\huge 0}&\text{\huge $B_1$}
	\end{array}\right]{U}_0^*,
    $$  where the first $j$ rows marked with stars coincide with $T_0$. By the construction, we can see that \eqref{nonstable} holds true and hence finishing the proof of Theorem~\ref{GKnonstable}.
\end{proof}

To summarize, we have showed that the Schur decomposition is backward stable and why it fails to be forward stable.


\begin{thebibliography}{Name YY}

 
	\bibitem{BOP}T. Bella,  V. Olshevsky, U. Prasad, {\it Lipschitz stability of canonical Jordan bases of $H$-selfadjoint matrices under structure-preserving perturbations}, Linear Algebra and its Applications, {\bf 428}, 8--9, 2008,  2130--2176.
	
		
	\bibitem{DBT80}  H. Den Boer,  G.Ph. Thijsse,
	{\sl Semi-stability of sums of partial multiplicities under additive perturbations}, {Integral Equations and Operator Theory}, {\bf 3}, 1980,                                23--42.
	
	
	\bibitem{DMO} S. Dogruer Akgul, A. Minenkova, V. Olshevsky, {\it Lipschitz stability of $\gamma$-FOCS and RC canonical Jordan bases of real $H$-selfadjoint  matrices under small perturbations}.
	

	
	
	\bibitem{GK78} I. Gohberg, M. A. Kaashoek, \textit{Unsolved problems in matrix and operator
	theory}, Integral Equations Operator Theory, \textbf{1}, 1978, 278--283.
	
	\bibitem{GLR86} I. Gohberg, P. Lancaster, L. Rodman, {\it Invariant
		Subspaces of Matrices with Applications}, Canadian Mathematical Society Series of Monographs and Advanced Texts. A Wiley-Interscience Publication. John Wiley\& Sons, Inc., New York, 1986, xviii+692~pp.
	
	
	


	
	\bibitem{K66} T. Kato,  {\it Perturbation theory for linear operators}, Die Grundlehren
	der mathematischen Wissenschaften, Band 132 Springer-Verlag New York,
	Inc., New York, 1966, xix+592 pp.
	
	\bibitem{KGMP03}     M. Konstantinov, D. Gu, V. Mehrmann, P. Petkov,
	{\it Perturbation Theory for Matrix Equations Studies in
		Computational Mathematics}, {\bf9}, North Holland  Publishing Co., Amsterdam, 2003,  xii+429 pp. 
	
	
	
	
	\bibitem{MP80}   A. Markus, E. Parilis, {\it The change of the
		Jordan structure
		of a matrix under small perturbations}, { Mat. Issled.},
	{\bf 54} (1980),  98--109 ({\em in Russian}),
	{\it English translation}: {Linear Algebra
		Appl.}, {\bf 54}, 1983, 139--152.
	
	\bibitem{O89}    V. Olshevsky, {\it  A condition for the nearness of sets of invariant subspaces of near matrices in terms of their Jordan structures.}, (in Russian), Siberian Math. Journal,
	{\bf30},  4, 1989, 102--110,	English translation: Siberian Math. Journal, Plenum publishing corp., {\bf30}, 4, 1989,  580--586.
	
	
	\bibitem{MO} V. Matsaev, V. Olshevsky,
\textit{Cyclic dimensions, kernel multiplicities, and Gohberg-Kaashoek numbers},	Linear Algebra Appl., \textbf{239}, 1996, 161--174. 
	
	\bibitem{Os73}  A.M. Ostrowski, \textit{ Solution of equations in Euclidean and Banach spaces. Third edition of Solution of equations and systems of equations. Pure and Applied Mathematics}, Vol. 9. Academic Press, New York-London, 1973. xx+412 pp. 
		
	\bibitem{S05} B. Simon, \textit{Orthogonal Polynomials on the Unit Circle, Part 1: Classical Theory}, AMS Colloquium Series, American Mathematical Society,
	Providence, RI, 2005.
	
	\bibitem{SS90} G.W. Stewart, J.G. Sun, {\it Matrix Perturbation Theory}, Academic Press Inc., Boston, MA, 1990, xvi+365 pp. 
	
	
\bibitem{W65} J. H. Wilkinson, \textit{The algebraic eigenvalue problem}, Clarendon Press, Oxford, 1965, xviii+662 pp. 
	
\end{thebibliography}

\end{document}